\newcommand\cA{{\mathcal A}}
\newcommand\cF{{\mathcal F}}
\newcommand\cI{{\mathcal I}}
\newcommand\cP{{\mathcal P}}
\newcommand\cR{{\mathcal R}}
\newcommand\cT{{\mathcal T}}
\theoremstyle{plain}
\newtheorem{theorem}{Theorem}[section]
\newtheorem{lemma}[theorem]{Lemma}
\newtheorem{corollary}[theorem]{Corollary}
\theoremstyle{definition}
\newtheorem{proposition}[theorem]{Proposition}
\newtheorem{observation}[theorem]{Observation}
\newcommand\lref[1]{Lemma~\ref{lem:#1}}
\newcommand\tref[1]{Theorem~\ref{thm:#1}}
\newcommand\cref[1]{Corollary~\ref{cor:#1}}
\newcommand\sref[1]{Section~\ref{sec:#1}}
\newcommand\pref[1]{Proposition~\ref{prop:#1}}
\newcommand\oref[1]{Observation~\ref{obs:#1}}
\begin{document}
\author{D\'aniel Gerbner, Gyula O.H. Katona, D\"om\"ot\"or P\'alv\"olgyi and Bal\'azs Patk\'os}

\title{Majority and Plurality Problems}
\maketitle

\begin{abstract}
Given a set of $n$ balls each colored with a color, a ball is said to be majority, $k$-majority, plurality if
its color class has size larger than half of the number of balls, has size at least $k$, has size larger than any other color class; respectively. We address the problem of finding the minimum number of queries (a comparison of a pair of balls if they have the same color or not) that is needed to decide whether a majority, $k$-majority or plurality ball exists and if so then show one such ball. We consider both adaptive and non-adaptive strategies and in certain cases, we also address weighted versions of the problems.\end{abstract}

%\begin{keyword} combinatorial search \sep majority \sep plurality \end{keyword}

\section{Introduction}

Two very much investigated problems in combinatorial search theory 
are the so-called majority and
plurality problems. In this context, we are given $n$ balls in an urn, 
each colored with one color. A majority
ball is one such that its color class has size strictly larger
than $n/2$. A plurality ball is one such that its color class is
strictly larger than any other color class. The aim is either to
decide whether there exists a majority/plurality ball or even to
show one (if there exists one). Note that if the number of colors
is two, then the majority and the plurality problems coincide.
Although there are other models (e.g \cite{DKW}), in the original
settings a query is a pair of balls and the answer to the query
tells us whether the two balls have the same color or not.
Throughout the paper we consider queries of this sort.

The first results in this area of combinatorial search theory are
due to Fisher and Salzberg \cite{FS} and Saks and Werman
\cite{SW}. In \cite{FS} it is proved that if the number of
possible colors is unknown, then the minimum number of queries in
an adaptive search for a majority ball is $\lfloor 3n/2 \rfloor
-2$, while \cite{SW} contains the result that if the number of
colors is two, then the minimum number of queries needed to find a
majority ball is $n-b(n)$, where $b(n)$ is the number of 1's in
the binary representation of $n$. The latter result was later
reproved in a simpler way by Alonso, Reingold, and Schott
\cite{ARS} and Wiener \cite{W}.

The adaptive version of the plurality problem was first considered
by Aigner, De Marco, and Montangero in \cite{ADM}, where they
showed that for any fixed positive integer $c$, if the number of
possible colors is at most $c$, then the minimum number of queries
needed in an adaptive search for a plurality ball is of linear
order, and the constants depend on $c$. Non-adaptive and other
versions of the plurality problem were considered in \cite{A}.

Non-adaptive strategies were also studied by Chung, Graham, Mao
and Yao \cite{CGMY1, CGMY2}. They showed a linear upper bound for
the majority problem in case the existence of a majority color is
assumed. They mention a quadratic lower bound without this extra
assumption. We precisely determine the minimum number of queries needed. 
They also obtain lower and upper bounds on the
plurality problem in the non-adaptive case. We improve those
bounds and find the correct asymptotics of the minimum number of
queries.

\vskip 0.3truecm

To state our results we introduce some notations. $M_c(n)$ denotes
the minimum number of queries that is needed to determine if there
exists a majority color and if so, then to show one ball of that
color and $P_c(n)$ denotes the minimum number of queries that is
needed to determine if there exists a plurality color and if so,
then to show one ball of that color. In both cases the subscript
$c$ stands for the number of possible colors. The corresponding
non-adaptive parameters are denoted by $M^*_c(n)$ and $P^*_c(n)$.
A ball is said to be $k$-majority if its color class contains at
least $k$ balls. $M_c(n,k)$ denotes the minimum number of queries
that is needed to determine if there exists a $k$-majority color
and if so, then to show one ball of that color and $M^*_c(n,k)$
denotes the parameter of the non-adaptive variant.

We also consider weighted problems. Let $S=\{w(1), \dots, w(n)\}$
be a multiset of positive numbers, where $w(i)$ is considered to
be the weight of the $i$th ball. The total weight $w=w(S)$ of the
balls is $\sum_{i=1}^nw(i)$. The weight $w(T)$ of a subset $T
\subseteq [n]$ is $\sum_{i \in T}w(i)$. A color is majority if its
color class $C$ satisfies $w(C)>w/2$ and $k$-majority if $w(C) \ge
k$ holds. A color is said to be plurality if the weight $w(C)$ of
its color class $C$ is strictly greater than the weights of all
the other color classes. The appropriate parameters are denoted by
$M_c(S), M_c(S,k), M_c^*(S), M_c^*(S,k)$ and $P_c(S), P_c^*(S)$.

For a set $Q$ of queries we define the \textit{query graph} $G_Q$
to be the graph where the vertices correspond to balls and two
vertices are joined by an edge if and only if there exists a query
in $Q$ that asks for the comparison of the two corresponding
balls.

Throughout the paper $\log$ stands for the logarithm of base 2.

\vskip 0.3truecm

In \sref{tools} we describe a result of Hayek, Kutin and Melkebeek
\cite{HKM} and some further observations that will serve as tools
in our proofs. In \sref{majo} we characterize the query graphs
that solve the non-adaptive $k$-majority problem. As a corollary
we obtain the following theorem.

\begin{theorem}
\label{thm:kmajo} Suppose $n \ge c >2$. Then $ M^*_c(n)=
\lceil\lceil n/2 \rceil n/2 \rceil$.
\end{theorem}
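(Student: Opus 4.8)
The plan is to derive the theorem from the characterization of query graphs solving the non-adaptive $k$-majority problem established in \sref{majo}, which I use in the following form: for $n$ balls, $c$ colours and $n\ge c$, a query graph $G_Q$ solves the non-adaptive $k$-majority problem if and only if every $k$-element set of vertices induces a connected subgraph of $G_Q$. A majority ball is exactly a $\big(\lfloor n/2\rfloor+1\big)$-majority ball, so $M^*_c(n)=M^*_c\!\big(n,\lfloor n/2\rfloor+1\big)$; write $k:=\lfloor n/2\rfloor+1$ and $d:=n-k+1=\lceil n/2\rceil$. Thus it suffices to determine the minimum number of edges of a graph on $n$ vertices in which every $k$-set induces a connected subgraph.

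For the lower bound, suppose $G$ has that property and some vertex $v$ has degree at most $d-1$. Then $v$ has at least $n-1-(d-1)=k-1$ non-neighbours, and $v$ together with $k-1$ of them forms a $k$-set in which $v$ is isolated; since $k\ge2$ this $k$-set induces a disconnected graph, a contradiction. Hence $\delta(G)\ge d=\lceil n/2\rceil$, so $|E(G)|\ge nd/2$, and since $|E(G)|$ is an integer, $|E(G)|\ge\lceil nd/2\rceil=\lceil\lceil n/2\rceil n/2\rceil$. Therefore $M^*_c(n)\ge\lceil\lceil n/2\rceil n/2\rceil$.

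For the upper bound one exhibits, for each $n\ge c>2$, a graph $G^*$ on $n$ vertices with exactly $\lceil nd/2\rceil$ edges in which every $k$-set induces a connected subgraph. Passing to $H:=\overline{G^*}$, a $k$-set of $G^*$ is disconnected precisely when it can be split into two nonempty parts with all crossing pairs in $H$, i.e.\ exactly when $H$ contains a complete bipartite subgraph on at least $k$ vertices; so we want $H$ with $\Delta(H)\le n-1-d=\lfloor n/2\rfloor-1$, with the maximum possible number of edges, and with no complete bipartite subgraph on $\ge k$ vertices. For even $n$ one may take $H$ to be the disjoint union of two copies of $K_{n/2}$, i.e.\ $G^*=K_{n/2,n/2}$: this has $n^2/4=\lceil nd/2\rceil$ edges, and any complete bipartite subgraph of a disjoint union of two cliques lies inside one clique, hence spans at most $n/2<k$ vertices. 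For odd $n$ one needs an $H$ of the same flavour — near-$(\lfloor n/2\rfloor-1)$-regular with all components of size $<k$, so that complete bipartite subgraphs stay small — but a disjoint union of cliques of sizes $\le\lfloor n/2\rfloor$ turns out to have slightly too few edges, so $H$ must be a more carefully chosen graph, denser than a clique-union yet still free of a complete bipartite subgraph on $\ge k$ vertices. Producing and verifying such an $H$ (equivalently, such a $G^*$) is the main obstacle; the even case, the lower bound, and the reduction to the $k$-majority characterization are all short. Combining the two bounds gives $M^*_c(n)=\lceil\lceil n/2\rceil n/2\rceil$.
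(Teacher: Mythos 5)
Your overall route is the same as the paper's: reduce to the $k$-majority problem with $k=\lfloor n/2\rfloor+1$, invoke the characterization of the query graphs that solve it, and then count edges. Note that your condition ``every $k$-set induces a connected subgraph'' is exactly $(n-k+1)=\lceil n/2\rceil$-vertex-connectivity of $G_Q$, which is how \cref{nonadkmaj} phrases it (and its ``only if'' direction rests on \tref{charact}~(iii), whose hypothesis $n\le ck-k-c+2$ you should check holds for $c\ge 3$ and this $k$ --- it does). Your minimum-degree argument is the standard proof that a $d$-connected graph has at least $\lceil dn/2\rceil$ edges, and $K_{n/2,n/2}$ is indeed the extremal graph for even $n$; all of this matches the paper.

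The gap is the one you flag yourself: for odd $n$ you do not exhibit a graph with $\lceil \lceil n/2\rceil n/2\rceil$ edges in which every $k$-set is connected, so the upper bound --- and hence the equality --- is unproved for odd $n$. The missing ingredient is precisely Harary's classical theorem that for every $d<n$ there is a $d$-connected graph on $n$ vertices with exactly $\lceil dn/2\rceil$ edges (the Harary graphs $H_{d,n}$); this is the ``well known'' fact the paper quotes without proof. For odd $n$ and $d=\lceil n/2\rceil$ the construction is circulant-like: put the vertices on a cycle, join each vertex to its $\lfloor d/2\rfloor$ nearest neighbours in each direction, and, when $d$ is odd, additionally join $i$ to $i+(n-1)/2$ for $0\le i\le (n-1)/2$; one checks this is $d$-connected with $\lceil dn/2\rceil$ edges. (For instance, for $n=7$, $d=4$ the graph is the square of $C_7$, whose complement is $C_7$ --- not a union of cliques, consistent with your observation that clique-unions fall short.) Your reformulation via complete bipartite subgraphs of the complement is correct but is a detour: once you recognize the condition as $\lceil n/2\rceil$-connectivity, Harary's theorem closes the argument in one line, which is exactly what the paper does.
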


In the rest of \sref{majo} we consider the weighted adaptive
majority problem and obtain lower bounds on $M_2(S)$ and
$M_2(n,k)$. Our bounds are always at least as good as the one
established by Aigner \cite{A}, and in some cases our bounds are
better.

\sref{plur} contains bounds on $P^*_c(n)$ and $P^*(S)$. Our main
result concerning the plurality problem is the following theorem.

\begin{theorem}
\label{thm:nonadaptplur}
For any pair of integers $n$ and $c$, the following holds:
\[
\left\lceil\frac{1}{2} \left(n-1-\frac{n-1}{c-1}\right)n\right\rceil \le
P^*_c(n) \le \frac{c-2}{2(c-1)}n^2+n.
\]
\end{theorem}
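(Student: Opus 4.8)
The plan is to prove the two inequalities by separate arguments: the upper bound via an explicit non-adaptive query graph, and the lower bound via an adversary producing two colourings that are indistinguishable from the answers but differ in their plurality status. Both arguments rely on the observations of \sref{tools} and the majority-type characterisations of \sref{majo}.

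For the upper bound I would fix a partition $[n]=V_1\cup\dots\cup V_{c-1}$ into $c-1$ classes as equal as possible, and let $G$ consist of all pairs lying in two different $V_i$'s (the complete $(c-1)$-partite graph on this partition) together with a spanning path inside each $V_i$. Then
\[
|E(G)|=\sum_{i<j}|V_i||V_j|+\sum_i\bigl(|V_i|-1\bigr)=\frac12\Bigl(n^2-\sum_i|V_i|^2\Bigr)+\bigl(n-(c-1)\bigr)\le\frac{c-2}{2(c-1)}n^2+n,
\]
using $\sum_i|V_i|^2\ge n^2/(c-1)$. To see that $G$ solves the plurality problem, consider any colouring consistent with the answers and split its colour classes into \emph{split} classes (meeting at least two $V_i$'s) and \emph{confined} classes (contained in a single $V_i$). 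Every split class is a connected ``same''-component of the answered query graph, hence is determined exactly together with its size, and confined classes lying in different parts necessarily use different colours; the only freedom left is how the ``private'' balls of a part $V_i$ — those answered ``different'' against everything outside $V_i$ — are coloured, which is constrained by the spanning path on $V_i$ (in particular it reveals whether the private pool is monochromatic) and by the number of colours still unused. One then checks that the plurality verdict is forced in every surviving configuration: a private pool has size at most $|V_i|\le\lceil n/(c-1)\rceil$, while the split classes together with the other parts contain roughly $\tfrac{c-2}{c-1}n$ balls, so no admissible recolouring of a private pool can create or destroy a plurality. Carrying out this case analysis cleanly — in particular the near-ties that occur when $c-1\nmid n$, and the bookkeeping of how many colours remain available to each part — is the main obstacle on this side.

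For the lower bound, suppose a non-adaptive strategy has query graph $G$ with $|E(G)|$ strictly below the stated value; equivalently the non-query graph $\coG$ has more than $\tfrac1{c-1}\binom n2$ edges, so $\Delta(\coG)>\tfrac{n-1}{c-1}$ and we may pick a ball $x$ together with the set $A$ of balls not queried against $x$, with $|A|=\deg_{\coG}(x)>\tfrac{n-1}{c-1}$. Using the reductions of \sref{tools} it suffices to produce two colourings with at most $c$ colours that agree on every query of $G$ but differ in plurality status. I would build a reference colouring in which a set $T\subseteq\{x\}\cup A$ with $x\in T$ is a monochromatic class, a second class has the same (maximum) size $M\approx n/(c-1)$, and the remaining at most $c-3$ colours are strictly smaller, and then obtain the second colouring by recolouring $x$ with a colour not yet used: this drops $|T|$ below $M$, leaving a unique largest class, so the plurality status changes from ``none'' to ``exists''. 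Since the two colourings differ only at $x$ and $x$ is unqueried against $T\setminus\{x\}\subseteq A$, every answer of $G$ is unchanged, contradicting correctness. The crux here — and what I expect to be the main difficulty — is the extremal step of choosing $M$ and $T$ so that both the colour budget ($\le c$ colours, in fact $\le c-1$ in the reference colouring) and the degree constraint $|T|-1\le\deg_{\coG}(x)$ hold simultaneously; this is exactly where the constant $\tfrac1{c-1}$ and the ceiling in the statement enter, and the small-$n$ or divisibility corner cases (for example $c=3$ with $n$ odd, where one instead starts from an unbalanced reference colouring and recolours an element of the larger class to a fresh colour) must be handled by such a variant flip or by a direct check.
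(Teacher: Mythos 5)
Your overall strategy coincides with the paper's on both sides: the lower bound via a low-degree vertex $x$ and two colourings that differ only at $x$ (this is exactly the paper's \lref{mindeg}, and your sketch, including the parity flip for the exceptional case, matches the paper's two cases $n-1\not\equiv 1$ and $n-1\equiv 1 \pmod{c-1}$); and the upper bound via the complete $(c-1)$-partite Tur\'an graph with an extra spanning subgraph inside each part, using the fact (the paper's \oref{multipart}) that a cross-part SAME answer determines the whole colour class. The lower-bound half is essentially complete modulo routine arithmetic.

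The genuine gap is in the upper bound, precisely at the step you yourself flag as ``the main obstacle'': you never verify that the plurality verdict is forced, and the heuristic you offer in its place is not the right comparison. The delicate case is not a private pool measured against ``roughly $\frac{c-2}{c-1}n$ balls'', but a near-tie between a confined class of size up to $\lceil n/(c-1)\rceil$ and a split (cross-part) class, whose size the paper bounds from below by $\lfloor n/(c-1)\rfloor$ via a pigeonhole count on the parts covered by split classes; deciding the plurality question then requires determining the confined classes of certain parts exactly. This is also why the paper adds a spanning \emph{cycle}, not a path, to each part: the critical subcase is a part $V_j$ containing exactly one ball $v$ that belongs to a split class, where one must still decide whether $V_j\setminus\{v\}$ is monochromatic. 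A cycle minus one vertex is connected, so the answers settle this; a path minus an internal vertex splits into two segments $P_1,P_2$ with no query between them, and whether they share a colour (forming one class of size $|V_j|-1$, which can tie the largest split class when $c-1\nmid n$) is not determined by the answers. One can try to rescue the path construction by a colour-budget argument — showing that whenever the two completions would give different plurality verdicts, one of them needs more than $c$ colours — but that is an additional argument you neither give nor identify as necessary. Either switch to cycles and carry out the case analysis on the number $k$ of split classes and the number $l$ of uncovered parts as the paper does, or supply the counting argument for paths.
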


\section{Tools}
\label{sec:tools}

In this section we introduce a result of Hayes, Kutin and
Melkebeek \cite{HKM} that we will use in \sref{majo}. We also make
an easy observation that will serve as a tool in \sref{plur}.

Let us start with describing the context of the result by Hayes,
Kutin and Melkebeek. Let $f$ be a Boolean function of $n$
variables $x_1,x_2,...,x_n$. A \textit{parity question} is a
subset $T \subseteq [n]$ and the answer to this question is
$\sum_{i\in T}x_i$ modulo 2. Let us define
$\mathcal{D}^{\cP\cA\cR\cI\cT\mathcal{Y}}(f)$ to be the minimum
number of parity questions needed to determine the value of $f$.

\begin{lemma}[Hayes, Kutin and Melkebeek, Lemma 17 in \cite{HKM}]
\label{lem:also} Let $f$ be a Boolean function on $\{0,1\}^n$. If
$\mathcal{D}^{\cP\cA\cR\cI\cT\mathcal{Y}}(f)\le d$, then $2^{n-d}$
divides $|f^{-1}(1)|$.
\end{lemma}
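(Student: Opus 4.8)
The plan is to prove this by examining an optimal parity decision tree for $f$. Since $\mathcal{D}^{\cP\cA\cR\cI\cT\mathcal{Y}}(f)\le d$, I would fix a \emph{parity decision tree} of depth at most $d$ that computes $f$: each internal node is labelled by a set $S\subseteq[n]$ and has two outgoing edges labelled $0$ and $1$, each leaf is labelled by a value in $\{0,1\}$, and an input $x\in\{0,1\}^n$ traverses the tree by following, at a node labelled $S$, the edge labelled $\sum_{i\in S}x_i \bmod 2$, ending at a leaf whose label equals $f(x)$. The only feature of the model I will use is that the question posed at a node depends solely on the answers received above it. (We may assume $d\le n$, the statement being trivial otherwise.)

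The first step is to describe the \emph{fibre} of each leaf. Fix a leaf $\ell$ at depth $r\le d$, let $S_1,\dots,S_r\subseteq[n]$ be the sets queried along the root-to-$\ell$ path, and $a_1,\dots,a_r\in\{0,1\}$ the corresponding answers. I claim the set of inputs reaching $\ell$ is exactly
\[
A_\ell=\Bigl\{\, x\in\{0,1\}^n \ :\ \sum_{i\in S_j}x_i\equiv a_j \ (\mathrm{mod}\ 2)\ \text{ for } j=1,\dots,r \,\Bigr\}.
\]
Indeed, an input satisfying all these equations takes successively the edges labelled $a_1,\dots,a_r$ and hence reaches $\ell$, and conversely any input reaching $\ell$ satisfies them. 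Thus $A_\ell$, if nonempty, is an affine subspace of $\mathbb{F}_2^n$ cut out by $r$ linear equations, so $|A_\ell|=2^{n-\rho_\ell}$, where $\rho_\ell\le r\le d$ is the $\mathbb{F}_2$-rank of $\{S_1,\dots,S_r\}$; in particular $2^{n-d}$ divides $|A_\ell|$.

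The second step is to sum over leaves. Because the tree is a tree, the fibres of distinct leaves are disjoint and their union is all of $\{0,1\}^n$; restricting to the leaves labelled $1$ yields the partition $f^{-1}(1)=\bigsqcup_{\ell\,:\,\mathrm{label}(\ell)=1}A_\ell$. Hence $|f^{-1}(1)|=\sum_{\ell\,:\,\mathrm{label}(\ell)=1}|A_\ell|$ is a sum of multiples of $2^{n-d}$ and is therefore itself divisible by $2^{n-d}$, as required.

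The argument is short, and I do not anticipate a real obstacle; the only point that needs care is the description of $A_\ell$ — that the inputs reaching a leaf are cut out \emph{precisely} by the parity equations read off its path, with no hidden extra constraint arising from the adaptive choice of questions — together with the standard fact that a consistent linear system over $\mathbb{F}_2$ in $n$ unknowns has solution set of size $2^{\,n-\mathrm{rank}}$. Getting this bookkeeping right is the whole of the proof.
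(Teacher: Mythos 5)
Your proof is correct. The paper itself only quotes this lemma from Hayes, Kutin and van Melkebeek and gives no proof (remarking that it is a straightforward extension of the Rivest--Vuillemin argument for ordinary decision trees), and your argument is exactly that standard proof: the inputs reaching a fixed leaf of a parity decision tree form the solution set of the parity equations along its path, hence an affine subspace of $\mathbb{F}_2^n$ of size $2^{n-\rho}$ with $\rho\le d$ (or the empty set), and summing these over the leaves labelled $1$ gives $|f^{-1}(1)|$ as a sum of multiples of $2^{n-d}$. The two points you flag as needing care -- that the fibre of a leaf is cut out precisely by the recorded parity equations despite the adaptivity, and the size formula for a consistent linear system -- are both handled correctly, so there is no gap.
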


The proof of the lemma is a straightforward extension of a lower
bound by Rivest and Vuillemin \cite{RV} for standard decision
trees. Note that if $|T|=2$, then the answer to a parity question
tells us whether the variables in $T$ have the same value or not.
Let $f$ be the function that takes value $1$ if there is majority
among the values of the variables and 0 if there is no majority.
It is easy to see that $f$ is a Boolean function.
Using \lref{also} Hayes, Kutin and Melkebeek gave a surprisingly short
proof of the lower bound of the result of Saks and Werman \cite{SW}
which states that for two colors the minimum number of queries needed to find a
majority ball is $n-b(n)$,
where $b(n)$ is the number of 1's in the binary representation of $n$.
Note also that \lref{also} cannot be applied if $n$ is
odd. On the other hand it yields a stronger result than that of
Saks and Werman in case $n$ is even: \lref{also} gives the same
lower bound even if the aim is only to decide whether there is
majority and one is allowed to ask parity questions where $|T|=2$
is not required.
The proof is to simply apply \lref{also} with $f$ being the
majority function and notice that the largest two power that divides
$\sum_{i>n/2} {n \choose i}=2^{n-1}-{n\choose n/2}/2$ is $2^{b(n)}$
(this follows e.g. from Kummer's theorem).

\vskip 0.5truecm

Let us finish this section with an easy observation on how
complete multipartite graphs can be used in detecting color
classes of balls. The next observation will be only used in the
non-adaptive case.

\begin{observation}
\label{obs:multipart} If $G_Q$ is a complete multipartite graph
and $u,v$ are vertices of different classes corresponding to balls
of the same color $C$, then all the balls of color $C$ can be
identified.
\end{observation}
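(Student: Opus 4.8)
The plan is to show that the answers to the queries in $Q$ by themselves determine the color class $C$, once we know that the two balls $u$ and $v$, lying in distinct parts of the complete multipartite graph $G_Q$, both have color $C$. (This hypothesis is itself checkable, since $u$ and $v$ are in different classes, so the query $\{u,v\}$ belongs to $Q$.) Write $V_1,\dots,V_m$ for the vertex classes of $G_Q$ and say $u\in V_i$, $v\in V_j$ with $i\neq j$. The crucial point is that every ball $w\notin\{u,v\}$ is compared by some query in $Q$ with at least one of $u,v$: the ball $w$ lies in a single class $V_k$, and since $i\neq j$ it cannot happen that $k=i$ and $k=j$ at once, so $w$ lies in a different part from at least one of $u,v$, hence is joined to it in $G_Q$.

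First I would fix such a $w$ and pick $x\in\{u,v\}$ lying in a part different from that of $w$, so that $\{w,x\}\in Q$. As $x$ has color $C$, the answer to $\{w,x\}$ is ``same color'' exactly when $w$ has color $C$; otherwise it is ``different color''. Hence, from the answers already received, each ball $w$ is correctly classified as belonging to $C$ or not. Assembling this, the color class $C$ consists of $u$, $v$, and precisely those balls $w$ for which the chosen query returned ``same color'', so all balls of color $C$ are identified, as claimed.

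The proof has no genuinely hard step; the only care needed is in the bookkeeping over where $w$ sits relative to the two fixed parts $V_i$ and $V_j$. In particular, when $w$ lies in a third class $V_k$ with $k\neq i,j$, both $\{w,u\}$ and $\{w,v\}$ are queries in $Q$ and they necessarily return the same answer, so the choice of $x$ is immaterial. It is also worth noting that the argument really uses only that $u$ and $v$ lie in different parts and that all pairs across parts involving $u$ or $v$ are queried; the full completeness of the multipartite graph is more than strictly necessary, but it is the natural hypothesis to record for the applications in \sref{plur}.
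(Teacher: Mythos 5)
Your proof is correct and uses exactly the paper's argument: every other ball lies in a different part from at least one of $u,v$, hence is queried against one of them, and that answer determines whether it has color $C$. The extra bookkeeping you add (consistency of the two answers when $w$ is in a third part, and the remark that full completeness is not needed) is fine but not required.
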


\begin{proof}
Any vertex $x$ corresponding to a ball of color $C$ is joined by an edge in $G_Q$ to either $u$ or $v$ and therefore there is a query in $Q$ asking whether
the ball corresponding to $x$ has the same color as the balls corresponding to $u$ and $v$.
\end{proof}

\section{Majority}
\label{sec:majo} In this section we consider majority problems.
Let us start with the non-adaptive case. If the number of possible
colors is two, then Aigner solved the $k$-majority problem
provided $n/2<k$.

\begin{theorem}[Aigner, \cite{A}]
\label{thm:aig} For $n \ge 3$ \[
 M^*_2(n,k)=\left\{\begin{array}{ll}
 n-1 & \textrm{if $n<2k-1$ }
 \\  n-2 & \textrm{if $n=2k-1$}. \end{array} \right.\]
\end{theorem}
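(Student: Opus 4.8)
The plan is to prove a clean characterisation of the valid non-adaptive strategies and read the theorem off it. \textbf{Claim:} for $n\ge 3$ and $n/2<k$, a query graph $G_Q$ solves the non-adaptive two-colour $k$-majority problem if and only if either $G_Q$ is connected, or $n=2k-1$ and $G_Q$ has exactly two connected components. Granting this, the theorem is immediate: a connected graph on $n$ vertices has at least $n-1$ edges and a graph with two components has at least $n-2$ edges, both bounds being attained (by a spanning tree, resp.\ a forest consisting of two trees); and when $n<2k-1$ only connected graphs qualify, whereas when $n=2k-1$ two-component graphs qualify as well, so the minimum drops to $n-2$.

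For the reformulation, note that with two colours a colouring is just the set $S\subseteq[n]$ of balls of colour $1$, up to complementation, and two colourings $S,S'$ give the same answers to $Q$ exactly when $S\triangle S'$ is a union of connected components of $G_Q$; so the answers reveal the bipartition of each component into its two colour classes, but nothing about the alignment of different components or about isolated vertices. Since $n/2<k$, at most one colour class can have size $\ge k$. Call the set of colourings consistent with a given answer its \emph{answer-class}; a strategy is valid iff every answer-class is \emph{answerable}, i.e.\ either none of its colourings has a $k$-majority, or all of them do and some fixed ball lies in the large class of each of them.

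\emph{Sufficiency.} If $G_Q$ is connected the answers reveal the two colour classes exactly, of sizes $a\ge b$ with $a+b=n$; output a ball of the larger class if $a\ge k$ and ``no'' otherwise, which is correct since then $b\le a<k$. If $n=2k-1$ and $G_Q$ has components $V_1,V_2$, then $|V_1|$ and $|V_2|$ have opposite parities, so the gaps $\delta_1,\delta_2$ between the sizes of the two colour classes inside $V_1$, resp.\ $V_2$, satisfy $\delta_1\neq\delta_2$ (one even, one odd); taking $i$ with $\delta_i$ larger (so $\delta_i\ge 1$), a ball on the larger colour-side of $V_i$ lies in the majority class of every consistent colouring, because pairing that side with either side of the other component still gives more than $n/2$ balls. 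As $n$ is odd a majority always exists, so this is correct.

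\emph{Necessity (the main obstacle).} Suppose $G_Q$ has at least three components, or exactly two components with $n<2k-1$; we exhibit an unanswerable answer-class. If some component $V$ has size in the open interval $(n-k,k)$, colour every component monochromatically: the answer-class then contains the all-one-colour colouring (a $k$-majority containing every ball) and the colouring in which only $V$ gets colour $1$ (no $k$-majority at all). If every component has size $\le n-k$, again colour everything monochromatically: any colouring giving colour $1$ to a single component forces a named ball to avoid that component, hence to avoid every component, which is impossible. In the remaining case exactly one component $V_1$ has size $\ge k$; colour $R=[n]\setminus V_1$ monochromatically and create inside $V_1$ a bipartition $\{X,Y\}$ with $|X|$ in a short admissible range. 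If there are only two components, choose $|X|$ so that $X$ has no $k$-majority while $X\cup R$ does — possible because $n\le 2k-2$ and $|R|\ge 1$ make the range $[\max(n-k+1,\,k-|R|),\,k-1]$ nonempty. If there are at least three components, pick two components $V_2,V_3\subseteq R$ and take $|X|$ just below $n/2$ (possible since $n=2k-1$ is odd and the $V_i$ are nonempty): then the consistent colourings $X$, $X\cup V_2$, $X\cup V_3$ would force a named ball to lie at once in $Y\cup R$ and in $(X\cup V_2)\cap(X\cup V_3)=X$, which is impossible. The hard part is exactly this bookkeeping — checking that the admissible range for $|X|$ is nonempty in every regime (this is where $n\le 2k-1$ is used), that the listed colourings really belong to the answer-class, and that they really conflict; the remaining regimes and the small cases left open by $n\ge 3$ (e.g.\ $n=3$) should be verified directly.
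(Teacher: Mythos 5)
The paper does not actually prove this statement --- it is quoted from Aigner \cite{A} without proof --- so there is no in-paper argument to compare against. Your strategy (characterise the solving query graphs as exactly the connected graphs, plus the two-component graphs when $n=2k-1$, and then count edges) is the natural one and is the $c=2$ analogue of the paper's Theorem~\ref{thm:charact}/Corollary~\ref{cor:nonadkmaj}, which are stated only for $c>2$. Your reformulation via answer-classes and the whole sufficiency direction are correct.

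There is, however, a concrete gap in the necessity direction: your case split is on the number of components, but it should be on whether $n=2k-1$. In the regime ``at least three components, exactly one component $V_1$ of size $\ge k$, and $n\le 2k-2$'' you invoke the three-colouring argument with $|X|$ just below $n/2$, and that argument genuinely fails there: with $|X|\approx n/2<k-1$ none of the colourings $X$, $X\cup V_2$, $X\cup V_3$ need have a $k$-majority. For instance, take $n=10$, $k=8$, components of sizes $8,1,1$ and $|X|=4$: every consistent colouring has both classes of size at most $6<8$, so the answer ``no $k$-majority'' is correct and no contradiction arises. The repair is immediate and already contained in your own two-component argument: whenever $n\le 2k-2$, take $R=[n]\setminus V_1$ (regardless of how many components it splits into), choose $|X|\in[\max(n-k+1,k-|R|),k-1]$, and compare the consistent colourings $\{X,\,Y\cup R\}$ (no $k$-majority) and $\{X\cup R,\,Y\}$ (a $k$-majority); reserve the three-colouring argument for $n=2k-1$ with at least three components, where it works as you describe. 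With that reorganisation (and the small-$n$ checks you defer), the proof is complete.
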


Let us continue with the more general, weighted $k$-majority
model. The next theorem characterizes the query graphs that solve
the weighted $k$-majority problem provided some extra assumptions
are satisfied. For simplicity, we will assume that the vertex set
of the query graph is $[n]$. Given a multiset
$S=\{w_1,w_2,...,w_n\}$ of weights let $\mathcal{F}=\{F \subset
[n]: w(F)\ge k\}$ be the family of the $k$-majority sets. Let
$\cF_0$ denote the subfamily of minimal sets in $\cF$.

\begin{theorem}
\label{thm:charact} Suppose there are no 1-element sets in
$\mathcal{F}$. Then

\noindent (i) If each member of $\cF_0$ induces a connected
subgraph of the query graph $G_Q$, then $G_Q$ solves the weighted
$k$-majority problem.

\noindent (ii) If $2w([n]) < (k+1)(c+1)-2$, $c>2$ and $G_Q$ solves the
weighted $k$-majority problem, then each member of $\mathcal{F}_0$
induces a connected subgraph of $G_Q$.

\noindent (iii) Considering the non-weighted version, suppose $k$
is an integer. If $n\le ck-k-c+2$, $c>2$ and $G_Q$ solves the
$k$-majority problem, then each member of $\mathcal{F}$, i.e. any
set with at least $k$ elements, induces a connected subgraph of
$G_Q$.

\end{theorem}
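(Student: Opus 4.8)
===PROOF PROPOSAL===

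The plan is to prove the three parts in order, since each builds on the intuition of the previous one. For part (i), suppose every member of $\cF_0$ induces a connected subgraph of $G_Q$. I would argue that the answers to the queries in $Q$ determine, for each minimal $k$-majority set $F \in \cF_0$, whether $F$ is monochromatic: connectivity of $F$ in $G_Q$ means the color-equality relation restricted to $F$ is determined by the queries whose endpoints lie in $F$. Now a color class $C$ is a $k$-majority color exactly when $C \supseteq F$ for some $F \in \cF_0$ (because $\cF$ is upward closed and $\cF_0$ is its set of minimal elements), and since $C$ is monochromatic, $C \supseteq F$ forces $F$ itself to be monochromatic. Conversely, if some $F \in \cF_0$ turns out monochromatic, then its color class is a $k$-majority color and any ball of $F$ exhibits it. Hence from the query answers we can both decide existence and point to a witness ball, so $G_Q$ solves the weighted $k$-majority problem.

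For part (ii), I argue the contrapositive: suppose some $F \in \cF_0$ induces a disconnected subgraph, split it as $F = A \sqcup B$ with no $G_Q$-edge between $A$ and $B$. The goal is to build two colorings that yield identical query answers but differ on whether a $k$-majority color exists, or on which ball to show. The natural idea: in coloring $\chi_1$, give all of $F$ a single color (so $F$'s class has weight $w(F) \ge k$) and distribute the remaining balls $[n]\setminus F$ into the other $c-1$ colors; in coloring $\chi_2$, give $A$ one color and $B$ a different color, keeping everything outside $F$ colored as before, so no edge of $G_Q$ changes its same/different answer. The assumption $2w([n]) < (k+1)(c+1)-2$ is there precisely to guarantee we can carry out the rest: in $\chi_2$ we need to certify that \emph{no} color is a $k$-majority color (otherwise the two colorings might still be distinguishable by the existence question alone only if $\chi_1$ has a witness and $\chi_2$ does not, which is what we want — but we must also ensure $\chi_1$'s witness is genuinely forced and $\chi_2$ has no $k$-majority class at all, or has one that forces a different witness ball). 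The weight bound should let us show that after removing $F$ and splitting it, the $c$ resulting classes each have weight $< k$, or more precisely that we can arrange the outside balls so that the total is small enough that splitting $F$ destroys the only possible $k$-majority class; the inequality $2w < (k+1)(c+1)-2$ rearranges to $w < \frac{(k+1)(c+1)}{2} - 1$, and with $c$ colors available each below $k$ we get roughly $w \le c(k-1) + \text{(weight of } F\text{)} $ type bounds to play against. The main obstacle is exactly this bookkeeping: choosing the split sizes and the distribution of $[n]\setminus F$ so that $\chi_1$ has a unique (or at least a forced) $k$-majority witness while $\chi_2$ has none, using the hypothesis on $2w([n])$ and the minimality of $F$ (which gives $w(A), w(B) < k$ for any proper nonempty sub-split, since $F \in \cF_0$).

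For part (iii), the non-weighted version with $k$ an integer, the statement is stronger: \emph{every} $k$-element-or-larger set must be connected, not just the minimal ones. Here I would adapt the argument of (ii) but now take $F$ to be an arbitrary set of size $\ge k$ that is disconnected in $G_Q$, again splitting $F = A \sqcup B$ across the cut. The difference is that $F$ need not be minimal, so $A$ or $B$ could itself have size $\ge k$; but then coloring $A$ and $B$ with distinct colors in $\chi_2$ could accidentally create a $k$-majority class. To avoid this one should first pass to a \emph{minimal} disconnected offending set, or choose the cut so that both sides are smaller than $k$ — if $F$ is an inclusion-minimal set of size $\ge k$ that is disconnected, then $|F| = k$ exactly (removing any vertex from a larger set keeps size $\ge k$ and could be made connected only if... — this needs care), and any component has size $< k$. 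With $|F| = k$ and the bound $n \le ck - k - c + 2 = (c-1)(k-1) + 1$, the complement $[n] \setminus F$ has at most $(c-1)(k-1) + 1 - k = (c-1)(k-1) - k + 1$ balls, which we distribute among $c-1$ colors so each class stays below $k$; then $\chi_2$ (splitting $F$) has all $c$ classes of size $< k$, no $k$-majority color, while $\chi_1$ (merging $F$) does, and the query answers agree. The obstacle here is the same combinatorial arrangement plus the subtlety of reducing to the case $|F| = k$ with small components; I expect this reduction — showing that a disconnected set of size $\ge k$ yields a disconnected set of size exactly $k$ whose components are all smaller than $k$, or else handling the large-component case directly — to be the delicate point, and the explicit counting against $n \le ck - k - c + 2$ to be routine once the combinatorial setup is fixed.
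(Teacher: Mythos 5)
Your overall strategy is the same as the paper's: part (i) by checking monochromaticity of minimal sets via connectivity, parts (ii) and (iii) by an adversary argument that splits a disconnected (minimal) $k$-majority set into two colors across the disconnection while distributing the remaining balls into color classes of weight below $k$. Part (i) is fine, and part (iii) is essentially correct: passing to an inclusion-minimal disconnected $F'\in\cF$ with $|F'|=k$ (delete vertices while keeping two components nonempty) and the count $n-k\le (c-2)(k-1)$ make the outside packing trivial since all items have unit weight.

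There are, however, two concrete problems in part (ii). First, an off-by-one in the colors: you let $\chi_1$ use one color on $F$ and the \emph{other $c-1$ colors} outside, and then $\chi_2$ gives $A$ and $B$ two distinct colors while ``keeping everything outside $F$ colored as before'' --- that is $c+1$ colors, and reusing an outside color for $B$ would risk a SAME answer on a query from $B$ to the outside, destroying indistinguishability. You must use only $c-2$ colors on $[n]\setminus F$, and the hypotheses are calibrated exactly for that. Second, and more seriously, the feasibility of that packing is the actual content of (ii), and you explicitly leave it as ``the main obstacle'' without resolving it. The paper's argument is: $w(F_0)\ge k$ and $2w([n])<(k+1)(c+1)-2$ give $w([n]\setminus F_0)<(k+1)(c-1)/2$; each remaining ball has weight $<k$ (this is where the ``no $1$-element sets in $\cF$'' hypothesis enters --- you never use it); and a bin-packing observation shows that if such items cannot be placed into $c-2$ bins of capacity $k-1$, i.e.\ $c-1$ bins are required, then at most one bin can have weight $\le (k+1)/2$ (two such bins could be merged), forcing total weight at least $(c-2)\lceil(k+1)/2\rceil+\lfloor(k+1)/2\rfloor\ge (k+1)(c-1)/2$, a contradiction. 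Without this step the hypothesis $2w([n])<(k+1)(c+1)-2$ is never actually used, so as written the proof of (ii) is incomplete.
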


\begin{proof} (i) If the induced subgraph on $F$ is connected, then one can easily check if $F$ is monochromatic. By the assumption, a minimal $k$-majority set of each candidate for $k$-majority can be checked.

\bigskip

(ii) Suppose $F_0\in \mathcal{F}_0$ is not connected. Suppose the
answers are according to a coloring satisfying the following: one
component of $F_0$ is blue, all the others are red, and the
remaining $c-2$ colors are used on the other vertices in such a
way that none of those colors are in $k$-majority. If such a
coloring exists, there is no $k$-majority, but it cannot be
distinguished from the case when every vertex in $F_0$ is blue
(what would make blue a majority color).

To prove that such a coloring exists, we have to show that there is a partition of
$[n] \setminus F_0$ into $c-2$ color classes $A_1,A_2,...,A_{c-2}$,
such that $w(A_i)\le k-1$ for all $i$. The conditions
imply $w([n] \setminus F_0)< (k+1)(c-1)/2$. Notice that this is a Bin Packing problem;
we have to prove that certain items with weight sum $<(k+1)(c-1)/2$ fit into $c-2$ bins.
Indeed, it is well-known (and easy to see) that if $c-1(\ge 2)$ bins are required then the sum has to be at least $(c-2)\lceil(k+1)/2\rceil+\lfloor(k+1)/2\rfloor\ge (k+1)(c-1)/2$, we are done.

%To prove that such a coloring exists, consider the partition of
%$[n] \setminus F_0$ into $c-2$ color classes $A_1,A_2,...,A_{c-2}$
%with $w(A_i) \ge w(A_{i+1})$ for any $1 \le i \le c-3$ such that
%$w(A_1)-w(A_{c-2})$ is as small as possible. We claim that such a
%coloring possesses the required property. Suppose indirectly that
%it does not, i.e. $w(A_1)\ge k$. Then $w([n]\setminus (F_0\cup
%A_1)) \le w(S)-2k< k(c+1)/2-2k=k(c-3)/2$. As there are $c-3$ color
%classes in $[n]\setminus (F_0\cup A_1)$, we have $w(A_{c-2})<
%k/2\le w(A_1)/2$. Now we use that $w(A_1)-w(A_{c-2})$ is as small
%as possible. It implies that it cannot decrease if we remove an
%element of $A_1$ and add it to $A_{c-2}$, i.e. every element of
%$A_1$ has weight at least $w(A_1)-w(A_{c-2})$. But
%$w(A_1)-w(A_{c-2})> w(A_1)-w(A_1)/2=w(A_1)/2$ implies that every
%element of $A_1$ has weight larger than half the weight of $A_1$,
%hence $A_1$ contains only one element. As $w(A_1) \ge k$ implies
%$A_1 \in \cF$, this contradicts our assumption that $\cF$ does not
%contain singletons.

\bigskip

(iii) The proof goes similarly to the previous case. Suppose $F
\in \cF$ is not connected. By removing elements from $F$ one by
one such that we make sure that at least two components of $F$ do
not get totally removed, we obtain a subset $F'$ of $F$ such that
$F' \in \cF_0$ and $F'$ is not connected. By the assumption on
$n$, we know that $|[n] \setminus F'|\le (c-2)(k-1)$. Let us
partition $[n] \setminus F'$ into $c-2$ color classes each
containing at most $k-1$ elements. If one component of $F_0$ is
colored blue and all other components are colored red, then there
is no $k$-majority. But this coloring cannot be distinguished from
the case when every vertex in $F_0$ is blue (what would make blue
a majority color).
\end{proof}

\begin{corollary}\label{cor:nonadkmaj}
Suppose $c>2$, $n>k>n/2$ and $n>1$. Then a query graph $G_Q$
solves the $k$-majority problem if and only if $G_Q$ is
$(n-k+1)$-connected.
\end{corollary}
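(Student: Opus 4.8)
The statement claims an equivalence, so I would prove the two directions separately, and the natural way to cash in the hypotheses $c>2$ and $n/2<k<n$ is via Theorem~\ref{thm:charact}. The plan is to observe that under these hypotheses the family $\cF$ of $k$-majority sets is exactly the family of subsets of $[n]$ of size $\ge k$ (no $1$-element set is in $\cF$ since $k\ge 2$, using $n>1$ and $k>n/2$), and that $\cF_0$ is exactly the family of $k$-element subsets. The key combinatorial translation is: \emph{every $k$-element subset of $[n]$ induces a connected subgraph of $G_Q$} if and only if \emph{$G_Q$ is $(n-k+1)$-connected}. Indeed, $G_Q$ fails to be $(n-k+1)$-connected precisely when there is a vertex cut of size at most $n-k$ separating $G_Q$ (or $G_Q$ has at most $n-k+1$ vertices, which is impossible here since $n\ge n-k+2$); removing such a cut leaves at least $k$ vertices split into at least two components, and any $k$-set obtained by picking at least one vertex from two different surviving components together with enough other surviving vertices is a disconnected induced subgraph. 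Conversely, a disconnected $k$-set $F$ has its complement, of size $n-k$, as a vertex cut of the relevant kind.

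For the ``if'' direction I would invoke Theorem~\ref{thm:charact}(i): if $G_Q$ is $(n-k+1)$-connected then, by the translation above, every $k$-set is connected, in particular every member of $\cF_0$ is connected, so $G_Q$ solves the $k$-majority problem. For the ``only if'' direction I would invoke Theorem~\ref{thm:charact}(iii): I need to check that the hypothesis $n\le ck-k-c+2$ holds. Since $k\ge \lfloor n/2\rfloor+1$, we have $ck-k-c+2=(c-1)(k-1)+1\ge (c-1)\lfloor n/2\rfloor+1$, and because $c\ge 3$ this is at least $2\lfloor n/2\rfloor+1\ge n$ (checking the trivial even/odd cases). So Theorem~\ref{thm:charact}(iii) applies: if $G_Q$ solves the $k$-majority problem then every set of size $\ge k$, in particular every $k$-set, induces a connected subgraph, hence by the translation $G_Q$ is $(n-k+1)$-connected.

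The only genuinely fiddly point — and the step I expect to need the most care — is the graph-theoretic equivalence between ``all $k$-subsets induce connected subgraphs'' and ``$(n-k+1)$-vertex-connectivity,'' together with the boundary bookkeeping about whether a graph on $n$ vertices can even \emph{be} $(n-k+1)$-connected (it can, e.g.\ $K_n$ is $(n-1)$-connected and $n-k+1\le n-1$ since $k\ge 2$). I would also remark explicitly that this corollary sharpens Corollary~\ref{thm:aig} and that the case $c=2$ is genuinely different, which is why the hypothesis $c>2$ cannot be dropped. Everything else is a direct substitution into the already-proven Theorem~\ref{thm:charact}.
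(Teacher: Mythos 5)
Your proposal is correct and follows essentially the same route as the paper: the ``if'' direction via Theorem~\ref{thm:charact}(i) using that removing the at most $n-k$ vertices outside a $k$-majority set leaves a connected graph, and the ``only if'' direction via Theorem~\ref{thm:charact}(iii), whose hypothesis $n\le ck-k-c+2$ the paper dismisses as ``a simple calculation'' and you verify explicitly. Your explicit treatment of the equivalence between ``all $k$-subsets induce connected subgraphs'' and $(n-k+1)$-connectivity is a welcome filling-in of a step the paper leaves implicit, but it is not a different argument.
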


\begin{proof}
Suppose first that $G_Q$ is $(n-k+1)$-connected and let $F \in
\cF$, i.e. $|F| \ge k$ and thus $|[n] \setminus F| \le n-k$. By
definition $G_Q$ stays connected after removing the vertices in
$[n] \setminus F$ and thus by \tref{charact} (i) $G_Q$ solves
$k$-majority.

Suppose now $G_Q$ solves the $k$-majority problem. A simple
calculation shows that the assumption of \tref{charact} (iii)
holds, thus all $k$-subsets of $[n]$ induce a connected subgraph
of $G_Q$, thus $G_Q$ is $(n-k+1)$-connected.
\end{proof}

\begin{proof}[Proof of \tref{kmajo}]
Clearly, the majority problem is the $k$-majority problem with
$k=\lfloor n/2\rfloor +1$. Thus by \cref{nonadkmaj} any query
graph $G_Q$ that solves the majority problem must be $\lceil n/2
\rceil$-connected  and it is well known that the minimum number of
edges that $\lceil n/2 \rceil$-connected graphs can have is
$\lceil \lceil n/2 \rceil n/2\rceil$.

%On the other hand, by using \cref{nonadkmaj}, it is easy to show
%that following query graphs solve the majority problem:

%\begin{enumerate}
%\item
%If $n$ is even, the complete bipartite graph $K_{n/2,n/2}$.
%\item
%If $n=4l+3$, the $(l+1)$st power of a Hamiltonian cycle.
%\item
%If $n=4l+1$,
%\end{enumerate}
\end{proof}

Let us note here that the upper bound of \tref{kmajo} holds also
in the weighted case, but such general lower bound cannot be found
 without extra assumptions on the multiset $S$ of weights. Indeed,
 if $w_1>\sum_{i=2}^nw_i$, holds, then without any query one knows that the ball with
 weight $w_1$ is a majority ball.

\vskip 0.6truecm

Let us now turn our attention to adaptive majority problems. We
will only address problems where the number of colors is two.
Apart from the results by Fisher and Salzberg \cite{FS} and Saks
and Werman \cite{SW} mentioned already in the Introduction, we are
aware of one more major result. If $\mu(n)$ denotes the largest
integer $l$ such that $2^l$ divides $n$, then Aigner's result can
be formulated in the following way:

\begin{theorem}
\label{thm:aig2} For any pair of integers $n \ge k>n/2$, the
inequality $M_2(n,k) \ge n-1-\mu({n-1 \choose k-1})$ holds.
\end{theorem}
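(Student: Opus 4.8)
The plan is to apply \lref{also} with a carefully chosen Boolean function, exactly mimicking the Hayes–Kutin–Melkebeek proof of the Saks–Werman lower bound that was described in \sref{tools}. An adaptive strategy using $q$ queries (each comparing two balls) is in particular an adaptive decision tree of depth $q$ using parity questions with $|T|=2$. So if we let $f\colon\{0,1\}^n\to\{0,1\}$ be a suitable Boolean function whose value is determined by such a strategy, \lref{also} gives $2^{n-q}\mid|f^{-1}(1)|$, hence $q\ge n-\mu(|f^{-1}(1)|)$.

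\medskip

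The function to use is the \emph{$k$-majority} function: $f(x)=1$ if $\sum_i x_i\ge k$ and $f(x)=0$ otherwise. Here the two colors are encoded by the two values $0,1$; a $k$-majority color exists iff at least $k$ of the variables agree, and since $k>n/2$ there is at most one such color, so $f$ as defined is exactly the indicator of ``the all-ones color class has size $\ge k$'' — wait, I need to be slightly careful: a strategy that \emph{shows} a $k$-majority ball must in particular decide whether a $k$-majority color exists, and by the $k>n/2$ hypothesis a $k$-majority color (if it exists) is the majority color, so ``$\exists$ $k$-majority'' $\iff$ ($\sum x_i\ge k$) or ($\sum x_i\le n-k$). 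Thus the decision version computes $g(x)=[\sum x_i\ge k]\vee[\sum x_i\le n-k]$, and $|g^{-1}(1)|=2\sum_{i\ge k}\binom{n}{i}$. To get the cleaner bound stated, it is better to use $f=[\sum x_i\ge k]$ directly: first I would argue that from a strategy showing a $k$-majority ball one can also compute $f$ — indeed, once the strategy outputs a ball, its color (value) is known, and one extra bit of bookkeeping about which value ``won'' lets us read off $f$; alternatively one observes that the query graph/strategy restricted to the event ``color $1$ wins'' already determines $f$. Then $|f^{-1}(1)|=\sum_{i=k}^n\binom{n}{i}$.

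\medskip

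**The combinatorial heart** is then the identity
\[
\sum_{i=k}^{n}\binom{n}{i}=\binom{n-1}{k-1}+\sum_{i=k}^{n-1}\binom{n-1}{i}
\]
— no, the clean route is the well-known hockey-stick-type partial-sum identity, or simply: $\sum_{i=k}^n\binom{n}{i}$ has the same $2$-adic valuation as $\binom{n-1}{k-1}$. One slick way: use the recursion $\sum_{i=k}^n\binom{n}{i}=2\sum_{i=k}^{n-1}\binom{n-1}{i}+\binom{n-1}{k-1}$ together with induction on $n$, or invoke Kummer's theorem on carries (as the excerpt does for the unweighted majority case) to compute $\mu$ of the sum directly. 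Whichever identity is used, the conclusion is $\mu\!\left(\sum_{i=k}^n\binom{n}{i}\right)=\mu\!\left(\binom{n-1}{k-1}\right)$, and combining with $q\ge n-\mu(|f^{-1}(1)|)$ yields $M_2(n,k)\ge n-\mu\!\big(\binom{n-1}{k-1}\big)$. (If instead the decision-version function $g$ is used, one gets an extra factor of $2$ inside, i.e.\ $|g^{-1}(1)|=2\binom{n-1}{k-1}\cdot(\text{odd})$ after the same manipulation, and the bound is off by one — so making the reduction to $f$ rather than $g$ is what produces the stated constant $n-1-\mu(\binom{n-1}{k-1})$.)

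\medskip

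**The main obstacle** I anticipate is precisely this bookkeeping reduction: justifying that a strategy which merely exhibits a $k$-majority ball (or reports none exists) can be converted, without extra queries, into a parity-decision tree computing $f(x)=[\sum x_i\ge k]$, so that the ``$-1$'' in the exponent is legitimate. The potential pitfall is that the output ball's value tells us \emph{which} color is $k$-majority but the decision problem is symmetric under swapping the two values, which costs a factor of $2$ (this is exactly why \lref{also} ``cannot be applied if $n$ is odd'' in the majority case and why one is happy in the even case). Resolving it cleanly — likely by noting that $f^{-1}(1)$ and $f^{-1}(0)$ are \emph{not} symmetric since $k>n/2$, so the decision tree for ``does color $1$ have $\ge k$ balls'' really does have depth $\le$ (number of queries), and then applying \lref{also} to $f$ rather than to $g$ — is the one place the argument needs genuine care rather than routine computation.
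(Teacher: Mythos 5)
Your proposal has two genuine gaps, and the second one is fatal. First, the reduction to the asymmetric function $f(\mathbf{x})=[\sum_i x_i\ge k]$ cannot work: every query in this model is a parity question with $|T|=2$, and every such question takes the same value on $\mathbf{x}$ and on its complement $\bar{\mathbf{x}}$, so no strategy (and no amount of ``bookkeeping'') can distinguish the coloring in which color $1$ has $\ge k$ balls from its complement in which color $0$ does. Only functions invariant under complementation are computable from comparison answers, which forces you back to $g(\mathbf{x})=[\sum_i x_i\ge k]\vee[\sum_i x_i\le n-k]$ with $|g^{-1}(1)|=2\sum_{i=k}^n\binom{n}{i}$; that is exactly \pref{p1}, and it yields $n-1-\mu\bigl(\sum_{i=k}^n\binom{n}{i}\bigr)$, not the stated bound.

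Second, the arithmetic identity you lean on, $\mu\bigl(\sum_{i=k}^n\binom{n}{i}\bigr)=\mu\bigl(\binom{n-1}{k-1}\bigr)$, is simply false, and the paper's own examples refute it: for $n=9$, $k=6$ one has $\sum_{i=6}^{9}\binom{9}{i}=130$ with $\mu=1$, while $\binom{8}{5}=56$ has $\mu=3$ (this is precisely why \pref{p1} gives $7$ where Aigner's bound gives only $5$); for $n=10$, $k=8$ the inequality goes the other way. So the two bounds are incomparable and neither implies the other by itself. The paper instead obtains \tref{aig2} by combining \emph{two} applications of \lref{also}: \pref{p1} with the function $g$ above, and \pref{p2} with the complement-invariant function ``a fixed ball is $k$-majority'' (so $|f^{-1}(1)|=2\sum_{i=k}^n\binom{n-1}{i-1}$), transferred to $M_2(n,k)$ via $\mathrm{Fix}_2(n,k)\le M_2(n,k)+1$. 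Then the identity $2\sum_{i=k}^n\binom{n-1}{i-1}=\binom{n-1}{k-1}+\sum_{i=k}^n\binom{n}{i}$ together with the fact that for $\alpha+\beta=\gamma$ the minimum of $\mu(\alpha),\mu(\beta),\mu(\gamma)$ is never attained uniquely shows that $\mu\bigl(\binom{n-1}{k-1}\bigr)$ is at least the smaller of the other two valuations, so \tref{aig2} follows from the larger of the two proven bounds. The second function and the fixed-ball reduction are the ideas missing from your argument; a single application of \lref{also} cannot produce $\binom{n-1}{k-1}$.
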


Note that this result is a generalization of the theorem of Saks
and Werman as if $n$ is even, then for $k= n/2+1$ we have
$\mu(\binom{n-1}{n/2})=b(n)-1$, where $b(n)$ is the number of 1's in the binary representation of $n$.

\vskip 0.3truecm

We establish two lower bounds on $M_2(n,k)$, which easily follow
from \lref{also}.

\begin{proposition}
\label{prop:p1} Let $k>n/2$. Then $$M_2(n,k) \ge
n-1-\mu\left(\sum_{i=k}^n{n \choose i}\right).$$
\end{proposition}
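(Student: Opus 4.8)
The plan is to apply \lref{also} to the right Boolean function. Let $f\colon\{0,1\}^n\to\{0,1\}$ be the function that takes the value $1$ precisely when the number of $1$'s among $x_1,\dots,x_n$ is at least $k$ (thinking of the two colors as $0$ and $1$, this is the ``$k$-majority among the $1$-colored balls'' indicator; since $k>n/2$ at most one of the two colors can be in $k$-majority, so $f$ is the honest indicator of whether a $k$-majority color exists on the $1$-side). First I would observe that a querying strategy for the $k$-majority problem that uses $M_2(n,k)$ queries in particular determines, for every coloring, whether a $k$-majority ball exists, and each query (``are balls $i$ and $j$ the same color?'') is exactly a parity question with $|T|=2$, namely $T=\{i,j\}$. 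Hence $\mathcal{D}^{\cP\cA\cR\cI\cT\mathcal{Y}}(f)\le M_2(n,k)$.

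Next I would invoke \lref{also} with $d=M_2(n,k)$: it gives that $2^{\,n-M_2(n,k)}$ divides $|f^{-1}(1)|$. Now $|f^{-1}(1)|$ is simply the number of binary strings of length $n$ with at least $k$ ones, i.e. $\sum_{i=k}^{n}\binom{n}{i}$. Writing $\mu(m)$ for the exponent of $2$ in $m$, divisibility by $2^{\,n-M_2(n,k)}$ means $n-M_2(n,k)\le \mu\!\left(\sum_{i=k}^{n}\binom{n}{i}\right)$, which rearranges to exactly the claimed bound $M_2(n,k)\ge n-1-\mu\!\left(\sum_{i=k}^{n}\binom{n}{i}\right)$ — wait, more precisely it gives $M_2(n,k)\ge n-\mu\!\left(\sum_{i=k}^{n}\binom{n}{i}\right)$, which is at least as strong; I would state it in the $n-1-\mu(\cdot)$ form to match the companion \pref{prop:p1}-style phrasing, noting the $-1$ is a safe weakening (the extra $-1$ presumably matches the analogous statement and leaves room in case $\mu$ of the sum is $0$).

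The two points that need a little care, rather than being genuine obstacles, are: (a) confirming that $f$ really is the Boolean function computed by the strategy — one must check that knowing whether a $k$-majority exists is equivalent to knowing $f$, which holds because with $k>n/2$ the majority-type predicate is symmetric and $f(x)=1 \iff$ the color ``$1$'' has $\ge k$ balls $\iff$ some color has $\ge k$ balls (for $x\neq$ the all-other-color string on the relevant side; a short symmetry remark handles both color labels at once); and (b) the arithmetic identity $|f^{-1}(1)|=\sum_{i=k}^n\binom ni$, which is immediate. I expect the only subtlety worth a sentence is making explicit that an adaptive decision tree of same/different queries is a special case of a parity-question decision tree (all query sets have size $2$), so that \lref{also} applies verbatim; everything else is a one-line computation.
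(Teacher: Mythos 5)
There is a genuine error in your choice of the Boolean function $f$. You take $f(\mathbf{x})=1$ iff $\sum_i x_i\ge k$, i.e.\ the indicator that the color labelled ``$1$'' is in $k$-majority. But the same/different queries only reveal the parities $x_i+x_j \bmod 2$, which are invariant under complementing all coordinates; consequently any function computed by such a strategy must satisfy $f(\mathbf{x})=f(\overline{\mathbf{x}})$. Your $f$ does not (e.g.\ the all-ones string maps to $1$ while the all-zeros string maps to $0$, even though in both cases a $k$-majority color exists). So the claimed equivalence in your point (a) --- ``color $1$ has $\ge k$ balls $\iff$ some color has $\ge k$ balls'' --- is false, and the inequality $\mathcal{D}^{\cP\cA\cR\cI\cT\mathcal{Y}}(f)\le M_2(n,k)$ does not hold for your $f$; no ``short symmetry remark'' repairs this, because the strategy genuinely cannot tell the two color labels apart. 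The correct function, which is what the paper uses, is $f(\mathbf{x})=1$ iff $\sum_i x_i\ge k$ \emph{or} $\sum_i x_i\le n-k$; since $k>n/2$ these two events are disjoint and $|f^{-1}(1)|=2\sum_{i=k}^{n}\binom{n}{i}$. Then \lref{also} gives $n-M_2(n,k)\le \mu\bigl(2\sum_{i=k}^n\binom{n}{i}\bigr)=1+\mu\bigl(\sum_{i=k}^n\binom{n}{i}\bigr)$, which is exactly the stated bound. The ``$-1$'' is thus not a cosmetic weakening to match phrasing elsewhere: it is forced by the factor $2$ coming from the two color labels.

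A concrete check that your ``stronger'' conclusion $M_2(n,k)\ge n-\mu\bigl(\sum_{i=k}^n\binom{n}{i}\bigr)$ is actually false: for $n=9$, $k=6$ one has $\sum_{i=6}^{9}\binom{9}{i}=130$, so $\mu=1$ and your bound would read $M_2(9,6)\ge 8$, whereas the paper records that the true value is $7$ (and the correct bound $n-1-\mu=7$ is tight there). Everything else in your write-up --- that an adaptive same/different strategy is a parity decision tree with $|T|=2$, and the appeal to \lref{also} --- is fine and matches the paper; only the function must be symmetrized.
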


\begin{proof}
Let $f:\{0,1\}^n \rightarrow \{0,1\}$ be the function defined by
$f({\bf x})=1$ if and only if $\sum_{i=1}^nx_i\ge k$ or
$\sum_{i=1}^nx_i\le n-k$, i.e., there is a $k$-majority color.
This is clearly a Boolean function and
$|f^{-1}(1)|=2\sum_{i=k}^n{n \choose i}$. The statement now
follows from \lref{also}.
\end{proof}

We can compare this bound to Aigner's lower bound. The example in
\cite{A} which shows that Aigner's bound is not optimal is $n=9$
and $k=6$, where his result only gives $5$, while the truth is
$7$. Our bound establishes the correct value $7$. On the other
hand if $n=10$ and $k=8$, our bound yields only $6$, Aigner's
result gives $7$, while the truth is $8$.

\begin{proposition}
\label{prop:p2} Let $k>n/2$. Then

(i) Let us fix an arbitrary ball. The number Fix$_2(n,k)$ of
questions needed to determine if the fixed ball is a $k$-majority
ball is at least $ n-1-\mu(\sum_{i=k}^n{n-1 \choose i-1}).$

(ii) $M_2(n,k) \ge n-2-\mu(\sum_{i=k}^n{n-1 \choose i-1}).$

\end{proposition}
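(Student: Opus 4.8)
The plan is to mimic the proof of Proposition~\ref{prop:p1}, but to set up a Boolean function in one fewer variable by conditioning on the color of the fixed ball, and then to relate the decision complexity of the original ($n$-ball) task to that of the conditioned ($(n-1)$-ball) task.

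For part (i): fix the ball indexed by $n$, and consider the task of deciding whether ball $n$ is $k$-majority. Without loss of generality assume in the analysis that ball $n$ has color ``$1$''; then a $k$-majority for ball $n$ means that among the remaining $n-1$ balls, at least $k-1$ have color $1$. So I would define $g\colon\{0,1\}^{n-1}\to\{0,1\}$ by $g(\mathbf{x})=1$ iff $\sum_{i=1}^{n-1}x_i\ge k-1$, where $x_i$ records whether ball $i$ agrees in color with ball $n$. A query comparing two balls $i,j\in[n-1]$ is a parity question on $x_i\oplus x_j$; a query comparing ball $i$ with ball $n$ is simply the single-variable parity question $x_i$. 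Hence any adaptive strategy of length $d$ for the ``is ball $n$ a $k$-majority ball?'' problem yields a parity decision tree of depth at most $d$ computing $g$, so $\mathcal{D}^{\cP\cA\cR\cI\cT\mathcal{Y}}(g)\le d=\mathrm{Fix}_2(n,k)$. Now $|g^{-1}(1)|=\sum_{i=k-1}^{n-1}\binom{n-1}{i}=\sum_{i=k}^{n}\binom{n-1}{i-1}$, and \lref{also} gives that $2^{(n-1)-d}$ divides this number, i.e. $(n-1)-\mathrm{Fix}_2(n,k)\le \mu\!\left(\sum_{i=k}^{n}\binom{n-1}{i-1}\right)$, which is the claimed bound. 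The only subtlety to spell out is that the answers to queries involving ball $n$ really do correspond to the variables $x_i$ under the normalization ``ball $n$ has color $1$'', and that using the opposite normalization only complements all these variables simultaneously, which changes neither the query structure nor $|g^{-1}(1)|$ by the symmetry $\mathbf{x}\mapsto\mathbf{1}-\mathbf{x}$ (note $g$ is not itself symmetric under this map, but the resulting function has a $1$-preimage of the same $2$-adic valuation).

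For part (ii): a strategy that solves $M_2(n,k)$ in particular, after learning (or being told by an oracle) one ball of the $k$-majority color — or simply by a final extra query — lets us decide whether a designated ball is $k$-majority. More cleanly, I would argue: any strategy achieving $M_2(n,k)$ can be extended by at most one additional query to a strategy that, for a fixed ball, decides whether that ball is $k$-majority; hence $\mathrm{Fix}_2(n,k)\le M_2(n,k)+1$, and combining with part (i) gives $M_2(n,k)\ge \mathrm{Fix}_2(n,k)-1\ge n-2-\mu\!\left(\sum_{i=k}^{n}\binom{n-1}{i-1}\right)$.

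The main obstacle is getting part~(ii)'s reduction exactly right: one must check that a majority-problem solver, which only needs to \emph{exhibit} a $k$-majority ball (or report none exists), can be converted with a bounded number of extra queries into something that certifies the status of a \emph{prescribed} ball. The cheapest route is to note that if the solver outputs a $k$-majority ball $b$, one extra query comparing $b$ with the fixed ball $n$ settles whether $n$ is $k$-majority (it is iff it has the same color as $b$, since there is at most one $k$-majority color when $k>n/2$), and if the solver reports ``no $k$-majority ball'' then $n$ is certainly not one, costing no extra query; so $\mathrm{Fix}_2(n,k)\le M_2(n,k)+1$ as needed.
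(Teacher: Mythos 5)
Your proof is correct and takes essentially the same route as the paper: part (i) is the intended application of \lref{also} (the paper keeps all $n$ variables and absorbs a factor of $2$ in $|f^{-1}(1)|$, whereas you condition on the fixed ball's color and work over $\{0,1\}^{n-1}$ — the two bookkeepings give the identical bound, and the normalization ``subtlety'' you flag is vacuous once $x_i$ is defined as agreement with the fixed ball), and part (ii) is exactly the paper's reduction $\mathrm{Fix}_2(n,k)\le M_2(n,k)+1$ via one extra comparison with the exhibited $k$-majority ball.
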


\begin{proof}  \lref{also} implies (i) as before, and (ii)  follows
as Fix$_2(n,k) \le M_2(n,k)+1$. Indeed, if we solve the
$k$-majority problem and the answer is that there is no
$k$-majority color, then the fixed ball cannot be a $k$-majority
ball. If the answer is a $k$-majority ball, then after comparing
this to our fixed ball we can decide whether the fixed ball is
also $k$-majority or not.
\end{proof}

Comparing \tref{aig2}, \pref{p1} and \pref{p2} (ii), one observes
that any two statements imply the third one. First note that
$n-2-\mu(\sum_{i=k}^n{n-1 \choose i-1})$ can be written as
$n-1-\mu(2\sum_{i=k}^n{n-1 \choose i-1})$ and also the identity
$2\sum_{i=k}^n{n-1 \choose i-1}={n-1 \choose k-1}+\sum_{i=k}^n{n
\choose i}$ holds. Finally note that for any integers $\alpha,
\beta, \gamma$ with $\alpha+\beta=\gamma$ there is no unique
minimum of $\mu(\alpha)$, $\mu(\beta)$ and $\mu(\gamma)$. This
means that we have also obtained a new proof of \tref{aig2}.

\vskip 0.5truecm

Let us now consider the weighted (adaptive) majority problem with
two colors. Suppose there are $p\neq 0$ ways to partition the
multiset $S$ into two parts of equal weight. Then

\begin{proposition}
\label{prop:p3} (i) At least $n-1-\mu(p)$ questions are needed.

(ii) In case $p$ is even, $n-2$ questions are enough
\end{proposition}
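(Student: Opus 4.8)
For part~(i) the plan is to invoke \lref{also}. Encode a colouring by $\mathbf{x}\in\{0,1\}^n$, where $x_i$ is the colour of ball~$i$, and let $f(\mathbf{x})=1$ precisely when a majority ball exists, i.e.\ when $w(\{i:x_i=1\})\neq w/2$. A query is a parity question on a two-element set, so an adaptive strategy that decides in $d$ queries whether a majority ball exists yields a parity decision tree for $f$ of depth $\le d$. The balanced subsets of $[n]$ come in complementary pairs, so $|f^{-1}(1)|=2^n-2p$; and since $w>0$ neither $\emptyset$ nor $[n]$ is balanced, so $p\le 2^{n-1}-1$ and hence $\mu(p)\le n-2$. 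Writing $p=2^{\mu(p)}u$ with $u$ odd gives $2^n-2p=2^{\mu(p)+1}\bigl(2^{\,n-1-\mu(p)}-u\bigr)$ with the second factor odd, so $\mu(2^n-2p)=\mu(p)+1$. By \lref{also}, $2^{\,n-d}\mid 2^n-2p$, whence $n-d\le\mu(p)+1$, i.e.\ $d\ge n-1-\mu(p)$.

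For part~(ii) I would run the natural adaptive algorithm that builds the query graph as a forest, one edge at a time, but stops after exactly $n-2$ edges---so the final query graph has two components---choosing the edges so that the remaining ambiguity is harmless. Along the way, track the number $\beta$ of still-consistent colourings that are \emph{balanced} (have no majority); $\beta$ is always even, since $\mathbf{x}$ and its complement are consistent together. The first ingredient is routine bookkeeping: with two components $A,B$ there are exactly $4$ consistent colourings, and summarising each component by its surplus $s=|w(A_1)-w(A_2)|\ge 0$ one checks that $\beta=4$ iff $s_A=s_B=0$, $\beta=2$ iff $s_A=s_B>0$, and $\beta=0$ iff $s_A\neq s_B$; moreover in the first case the correct answer is ``no majority ball'', and in the last case all four consistent colourings have a majority and there is a ball that is majority in every one of them (any ball in the heavier class of whichever component has the strictly larger surplus), which we output. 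So it suffices to ensure the run ends with $\beta\neq 2$.

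Accordingly the algorithm maintains the invariant ``$\beta=0$ or $4\mid\beta$'', valid initially since $\beta=2p$ and $p$ is even. At a step with at least three components (in particular just before the last query) I pick three components with representatives $u,v,w$ and ask whichever of $\{u,v\},\{u,w\},\{v,w\}$ preserves the invariant (if $\beta=0$, all do). To see that one always does when $4\mid\beta$ and $\beta>0$: let $\gamma_{uv}$ count the consistent balanced \emph{partitions} with $u,v$ on the same side, so querying $\{u,v\}$ puts $2\gamma_{uv}$ of the $\beta$ consistent balanced colourings on the ``same colour'' side; classifying the $\beta/2$ consistent balanced partitions by their trace on $\{u,v,w\}$---all three together (count $a$), or exactly one separated from the other two (counts $b_u,b_v,b_w$)---gives $\gamma_{uv}=a+b_w$, $\gamma_{uw}=a+b_v$, $\gamma_{vw}=a+b_u$ and $\beta/2=a+b_u+b_v+b_w$. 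If $\gamma_{uv},\gamma_{uw},\gamma_{vw}$ were all odd then $b_u\equiv b_v\equiv b_w$ and $a\not\equiv b_u\pmod 2$, forcing $\beta/2\equiv a+b_u\equiv 1\pmod 2$, i.e.\ $\beta\equiv2\pmod 4$, a contradiction. So some $\gamma$ is even, and the corresponding query splits the consistent balanced colourings into classes of sizes $2\gamma$ and $\beta-2\gamma$, both divisible by $4$; hence either answer keeps the invariant. After $n-2$ queries $\beta\le4$ and $4\mid\beta$ or $\beta=0$, so $\beta\in\{0,4\}$ and we can answer.

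The parity argument of the last paragraph is the heart of the proof, and it is precisely there that evenness of $p$ is used; for odd $p$ the invariant fails at the outset, consistent with part~(i) then forcing $n-1$ queries. The remaining pieces---the reduction to ``$\beta\neq2$'' and the extraction of the answer from a two-component configuration---are elementary manipulations with surpluses and complementary colourings and should not pose difficulty.
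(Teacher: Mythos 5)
Your proof is correct and follows essentially the same route as the paper: part (i) applies Lemma~\ref{lem:also} (you use the majority indicator with $|f^{-1}(1)|=2^n-2p$ where the paper uses the balancedness indicator with $|f^{-1}(1)|=2p$, an immaterial difference), and part (ii) is exactly the paper's strategy of querying, among three components, a pair whose comparison preserves the evenness of the number of consistent equipartitions. Your write-up of (ii) in fact supplies the endgame analysis (the two-component case and the extraction of a majority ball) that the paper leaves implicit.
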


\begin{proof}
Let $f:\{0,1\}^n \rightarrow \{0,1\}$ be the function defined by
$f({\bf x})=1$ if and only if $\sum_{i: x_i=0} w(i)=\sum_{i:
x_i=1} w(i)$. Again, (i) follows from \lref{also}.

For (ii), note that $n$ has to be at least $3$ (since zero weights
are not allowed). Now we claim that from any three elements there
are two that are in different equipartitions an even number of
times (and hence also in the same partition an even number of
times, so comparing them will keep the property that $p$ is even).
The proof is the following. For each partition add an edge between
the elements that are in different parts. Each partition gives
zero or two edges between our three elements. Thus two of them
will be connected with an even number of edges, we are done.
\end{proof}

Note that this means $M_2(S)=n-1$ iff $\mu(p)=0$. If $\mu(p)=1$,
then $M_2(S)= n-2$, but the opposite direction is not true, as
shown by the following example: $S=\{1, 10, 11, 100, 101, 110,
111\}$. Here $M_2(S)=5$ even though $p=4$.

\vskip 1truecm

Another possible assumption about the multiset $S$ of weights is
that ``every element matters'', i.e. for every $s\in S$ there
exists a coloring of $S\setminus \{s\}$ with two colors (red and
blue) such that the majority color is different if we extend this
coloring by giving $s$ color red or blue. We say that a multiset
$S$ of weights is \textit{non-slavery} if the above condition is
satisfied.

\begin{proposition}
\label{prop:p4} For every non-slavery multiset $S$ of weights the
inequality $M_2(S)\ge \lfloor n/2 \rfloor$ holds.
\end{proposition}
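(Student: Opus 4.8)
The plan is an adversary argument. Since comparing two balls that already lie in one component of the current query graph has a forced answer, we may assume the query graph stays a forest; after $t$ queries it has $m=n-t$ components $C_1,\dots,C_m$, on each of which the colouring is known up to a global swap of the two colours. I will encode the state by the multiset $D=\{d_1,\dots,d_m\}$, where $d_i$ is the absolute difference of the weights of the two colour classes of $C_i$; it starts as $\{w(1),\dots,w(n)\}$, and a query merging $C_x$ and $C_y$ turns $d_x,d_y$ into $d_x+d_y$ (answer ``same'') or into $|d_x-d_y|$ (answer ``different''), the adversary choosing which.

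The first step is to read off when the algorithm may stop. Writing $\delta_i=\tfrac12\bigl(w(C_i)-d_i\bigr)$ for the weight of the lighter colour class of $C_i$, the least possible weight of the colour class of a ball chosen on the heavier side of some $C_j$, over all colourings consistent with the answers, is $d_j+\sum_i\delta_i$; since $\sum_i(d_i+2\delta_i)=w$, this exceeds $w/2$ iff $d_j>\sum_{i\ne j}d_i$, and dually the answer ``no majority'' is forced iff every $d_i=0$. So the algorithm can give a correct answer precisely when $D=\{0,\dots,0\}$ or $D$ has a \emph{dominant} element, one strictly exceeding the sum of the others; in every other state two consistent colourings disagree on the answer, so it must ask on.

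The adversary's invariant will be that $D$ is \emph{non-slavery}, meaning that for every $d_i$ the remaining elements of $D$ can be split into two parts whose weights differ by at most $d_i$; this is exactly the hypothesis on $S$, so it holds initially. A non-slavery multiset with at least two elements is not dominant: if $d_1>d_2+\dots+d_m$, then in $D\setminus\{d_2\}$ the element $d_1$ alone exceeds the sum of the others, so the part containing it forces a difference larger than $d_2$, contradicting the condition at $d_2$. Moreover, for $t\le\lfloor n/2\rfloor-1$ we have $m=n-t\ge\lceil n/2\rceil+1$, so at least two components are singletons (a forest with $m$ components on $n$ vertices has at least $2m-n$ isolated vertices) and their imbalances are positive weights, whence $D\ne\{0,\dots,0\}$. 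Hence, as long as the invariant survives, the algorithm cannot stop, and it is enough to preserve it through the first $\lfloor n/2\rfloor-1$ queries. For $n\le 3$ nothing more is needed: with $0$ queries no ball is forced to be a majority ball (no weight exceeds $w/2$, as follows at once from non-slavery) and the monochromatic colouring rules out ``no majority''.

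The crux is the maintenance lemma: if $D$ is non-slavery with at least $4$ elements, then for any two elements $d_x,d_y$ at least one of $\bigl(D\setminus\{d_x,d_y\}\bigr)\cup\{d_x+d_y\}$ and $\bigl(D\setminus\{d_x,d_y\}\bigr)\cup\{|d_x-d_y|\}$ is again non-slavery. Every state reached within the first $\lfloor n/2\rfloor-2$ queries satisfies the hypothesis, since there $m=n-t\ge\lceil n/2\rceil+2\ge 4$, so with the lemma an induction on $t$ keeps $D$ non-slavery through query $\lfloor n/2\rfloor-1$, and the proposition follows. To prove the lemma I would start from a partition of $D\setminus\{e\}$ witnessing the condition at an element $e$: it is readily adjusted to a witness for $e$ in the ``sum'' multiset unless \emph{every} such witness separates $d_x$ from $d_y$, and to a witness in the ``difference'' multiset unless \emph{every} such witness keeps $d_x,d_y$ together; one then checks that these two obstructions cannot both occur, so that whenever both the ``sum'' and the ``difference'' choice would fail one can extract either a dominant element of the original $D$ or an element of $D$ violating its non-slavery, a contradiction. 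I expect this case analysis to be the only real work — in particular the subcase in which the offending element is $|d_x-d_y|$ itself, and the bookkeeping when some element equals $0$; everything else is routine.
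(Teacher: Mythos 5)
Your reduction to the imbalance multiset $D$ and your characterization of the stoppable states (all $d_i=0$, or one $d_j$ dominant) are fine, but the entire argument hangs on the ``maintenance lemma,'' which you state without proof and which is in fact \emph{false}. Take $D=S=\{10,9,4,5,5\}$. This is non-slavery: the required splits are $\{9,4\}\mid\{5,5\}$ for $10$; $\{10\}\mid\{4,5,5\}$ for $9$; $\{10,5\}\mid\{9,5\}$ for $4$; and $\{10,4\}\mid\{9,5\}$ for either $5$. Now let the algorithm's first query compare the balls of weight $10$ and $9$. The ``same'' answer yields $\{19,4,5,5\}$, which is not non-slavery at $4$ (every split of $\{19,5,5\}$ has difference at least $9$); the ``different'' answer yields $\{1,4,5,5\}$, which is not non-slavery at $1$ (every split of $\{4,5,5\}$ has difference at least $4$). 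So for a non-slavery $D$ with $5\ge 4$ elements, \emph{both} merges of a pair can destroy the invariant, and your induction collapses at the very first step. The failure is not cosmetic: non-slavery is sufficient but far from necessary for ``the algorithm cannot stop,'' so you would need a genuinely different (weaker, but still inductively maintainable) invariant, and the proposal gives no indication of what that would be. Note also that in this example the ``same'' answer even creates a dominant element ($19>4+5+5$), so the adversary is forced into $\{1,4,5,5\}$ and must then argue from a state outside your invariant.

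For contrast, the paper's proof avoids all of this bookkeeping: it applies the non-slavery hypothesis once, to the minimum-weight element $s$, obtaining a partition $S\setminus\{s\}=A\cup B$ with both $w(A\cup\{s\})$ and $w(B\cup\{s\})$ at least $w/2$ and $|A|\ge|B|$, so that $|A\cup\{s\}|\ge\lfloor n/2\rfloor+1$. The adversary then fixes its answers up front (DIFFERENT across the cut, SAME inside), and after fewer than $\lfloor n/2\rfloor$ queries some component of the query graph lies strictly inside $A\cup\{s\}$, yielding two consistent colorings with different answers to the majority problem. If you want to salvage your dynamic-adversary approach you would have to replace the non-slavery invariant by something like ``$D$ is not stoppable and cannot be made stoppable too quickly,'' but as written the proof has a fatal gap.
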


\begin{proof} Let $s\in S$ be the element with the smallest
weight. There is a partition $S\setminus \{s\}= A \cup B$ such
that both $w(A \cup \{s\})\ge w(V)/2$ and $w(B \cup \{s\})\ge
w(V)/2$. Suppose $|A| \ge |B|$. The Adversary can declare all balls
corresponding to weights in $B$ to be colored blue and apply the following
strategy: for any query $(a,b)$ with $a \in A \cup \{s\}, b \in B$ the answer is
DIFFERENT COLOR and for any other query the answer is SAME COLOR. As $s$ is the smallest
weight, then by the non-slavery property of $S$ we know that the answer to
the majority problem is different if all balls corresponding to weights in $A \cup \{s\}$ are colored red
or if there is at least one ball colored blue among those balls. But no matter
which set $Q$ of $l$ queries are made with $l < \lfloor n/2 \rfloor$, there will
be at least one component $C$ in $G_Q$ that lies totally within $A \cup \{s\}$.
Now the following two colorings are admissible to $Q$ and give different
answers to the majority problem:
\begin{enumerate}
\item
all balls corresponding to weights in $A\cup \{s\}$ are colored red
\item
balls corresponding to weights in $C$ are colored blue and to those in $(A \cup \{s\}) \setminus C$
are colored red.
\end{enumerate}
\end{proof}

\section{Plurality}
\label{sec:plur}

As we pointed out in the Introduction, the problem of finding a
plurality ball is the same as the problem of finding a majority
ball if the number of possible colors is two. In this section we
consider the case $c \ge 3$ and determine $P^*_c(n)$
asymptotically. Our main goal is to prove \tref{nonadaptplur} and
then to obtain an upper bound on $P^*_c(S)$ in the weighted case
for general weight set $S$.

The lower bound of \tref{nonadaptplur} immediately follows from the following lemma.

\begin{lemma}
\label{lem:mindeg} If $Q$ is a set of queries that solve the
problem, then the minimum degree in $G_Q$ is larger than
$n-1-\lceil\frac{n-1}{c-1}\rceil$. Furthermore, if $n-1 \equiv 1$
mod $c-1$, then the minimum degree in $G_Q$ is larger than
$n-1-\lfloor\frac{n-1}{c-1}\rfloor$.
\end{lemma}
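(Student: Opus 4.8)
The statement: if $Q$ is a set of queries solving the non-adaptive plurality problem with $c$ colors, then the minimum degree in $G_Q$ is larger than $n-1-\lceil (n-1)/(c-1)\rceil$, with an improvement when $n-1\equiv 1 \bmod (c-1)$.

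Strategy: I would argue by contradiction via an adversary/indistinguishability argument. Suppose some vertex $v$ (ball) has degree $d \le n-1-\lceil (n-1)/(c-1)\rceil$ in $G_Q$; then the set $U$ of balls NOT compared to $v$ (other than $v$ itself) has size $|U| = n-1-d \ge \lceil (n-1)/(c-1)\rceil$. The plan is to build two colorings that agree on all queries in $Q$ but have different answers to the plurality problem — one in which $v$'s color is plurality (or $v$ must be shown), another in which it is not — thereby contradicting that $Q$ solves the problem.

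Key steps in order: (1) Since no query compares $v$ to any ball in $U$, we may freely choose $v$'s color among the colors NOT appearing on $U$, as long as we are consistent with queries inside $v$'s component and can still realize the same answers to all queries. (2) First partition $U$ into $c-1$ color classes as evenly as possible; because $|U|\le n-1$ and we distribute among $c-1$ colors, the largest class has size $\ge \lceil |U|/(c-1)\rceil$ — but I actually want to exploit the reverse: pick the sizes so that $U$ together with $v$'s own component admits two scenarios. The cleaner approach: color $U$ with $c-1$ colors $1,\dots,c-1$ so that color $1$ is used exactly $\lceil (n-1)/(c-1)\rceil$ times on $U$, and put $v$ (with its whole query-component, all of which has been queried equal to $v$) into color $c$. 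Then in scenario A, if the rest of the balls can be colored to make color $c$ strictly largest, $v$ is plurality; in scenario B, recolor $v$'s component to color $1$, tipping color $1$ into a tie or lead so that $v$ is no longer plurality. The arithmetic $|U| \ge \lceil (n-1)/(c-1)\rceil$ is exactly what guarantees enough "room" in color $1$ to absorb $v$'s component and change the outcome. (3) Verify both colorings are consistent with every answer in $Q$: inside components the answer SAME is forced and respected; across components we answer DIFFERENT, which is consistent as long as the two scenarios keep distinct components distinctly colored — this is where one checks that $v$'s component is genuinely separate from $U$. (4) For the stronger bound when $n-1\equiv 1 \bmod(c-1)$: here $\lceil (n-1)/(c-1)\rceil = \lfloor (n-1)/(c-1)\rfloor + 1$, and the extra "$+1$" token in the even partition is isolated; a parity/counting refinement (analogous to the slack argument in \tref{charact}(ii)) lets one shave the bound down to $n-1-\lfloor (n-1)/(c-1)\rfloor$.

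The main obstacle I anticipate is step (3): ensuring that the two witness colorings really are indistinguishable by $Q$ while genuinely differing in their plurality answer. One must be careful that $v$'s query-component is small enough and disjoint enough from $U$ that recoloring it does not accidentally force some cross-query answer to flip, and that the remaining $n - |U| - |\text{comp}(v)|$ balls can always be colored to realize whatever marginal configuration is needed (this is again a small bin-packing-style feasibility check, like the one used in \tref{charact}(ii)). Handling the boundary cases — when $c$ is small, or $n-1$ is just barely divisible patterns — and getting the ceiling/floor bookkeeping exactly right in the two regimes will require care but no deep new idea.
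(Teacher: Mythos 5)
Your overall shape is right -- an adversary argument that exploits the non-neighborhood $U$ of a low-degree vertex $v$, with $|U|\ge\lceil\frac{n-1}{c-1}\rceil$ as the key inequality -- but the construction of the two indistinguishable colorings is not pinned down, and the specific move you propose would fail. The paper's proof colors \emph{all} of $V(G_Q)\setminus\{v\}$ once and for all by a near-equipartition into $c-1$ classes, chosen so that a largest class $T_1$ lies entirely inside $U$; every query at $v$ is answered DIFFERENT and every other query according to this fixed partition. The only thing that changes between the two scenarios is the color of the single vertex $v$: either a fresh $c$-th color (then the $c-1$ classes tie at the top, since $n-1\not\equiv 1 \bmod (c-1)$ guarantees at least two largest classes, so there is no plurality) or the color of $T_1$ (then $T_1\cup\{v\}$ is strictly largest). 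Because $v$ is never queried against $T_1$, both colorings are consistent with every answer. Your plan instead recolors ``$v$'s whole query-component'' from color $c$ to color $1$ between the scenarios; but vertices of that component other than $v$ may well have queries into the color-$1$ part of $U$, and those answers would flip from DIFFERENT to SAME, destroying indistinguishability. The repair is precisely to change only $v$'s color, which forces all queries at $v$ to be answered DIFFERENT and is why the bound is governed by whether a largest partition class fits inside $U$.

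The second assertion (the case $n-1\equiv 1\bmod(c-1)$) is not actually argued in your proposal -- ``a parity/counting refinement lets one shave the bound'' is not a proof, and the mechanism is different from what you suggest. Here the near-equipartition has a \emph{unique} largest class $T_1$, so the previous ``no plurality'' scenario breaks down. The paper instead places a \emph{small} class $T$ (of size $\lfloor\frac{n-1}{c-1}\rfloor$, which is all the room the weaker degree hypothesis gives you in $U$) inside the non-neighborhood: if $v$ takes the color of $T$, then $T\cup\{v\}$ ties with $T_1$ and there is no plurality, while if $v$ takes a fresh color, $T_1$ is the unique plurality class. You would need to supply this construction explicitly to complete the proof.
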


\begin{proof}[Proof of Lemma]
We consider the two cases separately. First, suppose that $n-1
\not\equiv 1$ mod $c-1$ and the degree of a vertex $x$ is at most
$n-1-\lceil\frac{n-1}{c-1}\rceil$. Then one can partition $V(G_Q)
\setminus \{x\}$ into $c-1$ sets whose size differ by at most 1
(we call such a partition equipartition), and $V \setminus (N(x)
\cup \{x\})$ contains $T_1$, one of the largest sets. If the
Adversary answers the queries in the following way, we will not be
able to tell whether there is a plurality color or not: for any
$u,v \ne x$ the answer is SAME COLOR if and only if $u,v$ belong
to the same set of the partition and the answer is DIFFERENT COLOR
for any query $(u,x)$. In this way, if $x$ has the same color as
$T_1$, a subset of its non-neighbors, then this color is
plurality, while if $x$ has color different from any other vertex,
then there is no plurality.

Let us consider the case $n-1 \equiv 1$ mod $c-1$. Then in a
$(c-1)$-equipartition of $V(G_Q) \setminus \{x\}$ there is one set
$T_1$ which is one larger than all other sets. Then consider such
a partition of $V(G_Q) \setminus \{x\}$ where $V \setminus (N(x)
\cup \{x\})$ contains a small set $T$ of the partition. Let the
Adversary answer the queries as above: for any $u,v \ne x$ the
answer is SAME COLOR if and only if $u,v$ belong to the same set
of the partition and the answer is DIFFERENT COLOR for any query
$(u,x)$. Then we cannot answer the problem even if we have the
extra information that balls belonging to the same partite set are
colored with the same color. Indeed, if $x$ has the same color as
all balls in $T$, then there is no plurality: $T_1$ and $T \cup
\{x\}$ have the same size. Or if $x$ is colored with a color that
is different from the color of all other balls, then any ball from
$T_1$ is a plurality ball.
\end{proof}

\begin{proof}[Proof of the upper bound in \tref{nonadaptplur}] It is enough to show a graph $G_{c,n}$ with
$\frac{c-2}{2(c-1)}n^2+n$ edges such that no matter what colors
the balls have, we are able to solve the problem after receiving
the answers to queries corresponding to edges of $G_{c,n}$. Let
$G_{c,n}$ be the $(c-1)$-partite Tur\'an graph on $n$ vertices
with a spanning cycle added to each partite set $V_1, \dots,
V_{c-1}$.

\vskip 0.3truecm

First, let us observe that if for some $x \in V_i, y \in V_j$ with
$i\ne j$ the answer to the query $(x,y)$ is SAME COLOR,
then after receiving answers to all queries we are able to
determine the whole color class of $x$ and $y$. Indeed, as $x$ and
$y$ belong to different partite sets, for any $v \in V$ at least
one of the queries $(x,V),(y,v)$ is asked and therefore we receive
a SAME COLOR answer if $v$ belongs to the color class of $x$ and
$y$ and a DIFFERENT COLOR answer is $v$ is of different color.

Let $k$ denote the number of color classes $C_1,...,C_k$ that
intersect at least two of the $V_i$'s and let $l$ denote the
number of partite sets that are not contained in $\cup_{i=1}^k
C_i$. Any of the remaining $c-k$ color classes is contained in one
of the partite sets not covered by $\cup_{i=1}^k C_i$, thus by the
pigeonhole principle $l \le c-k$. Moreover, if $l=c-k$, then to
cover all partite sets, all $c-k$ colors need to be used in
different partite sets and thus they should be of the form $V_j
\setminus \cup_{i=1}^k C_i$. As we are able to determine all color
classes, the proof is finished in this case.

From now on we assume $l \le c-k-1$. Let us suppose first that
$k=0$. Then as all color classes are included in one of the
partite sets we have that all but at most one partite sets form
one color class each and the last partite set is the union of at
most two color classes which we can identify due to the additional
spanning cycle.

Thus we may suppose that $l \le c-k-1$ and $k \ge 1$ hold. This
means that the number of already covered partite sets is $c-1-l
\ge k$, thus at least one of $C_1,...,C_k$ has size at least
$\lfloor \frac{n}{c-1}\rfloor$. Therefore it is enough to prove
that we are able to identify all other color classes that have
size at least $\lfloor \frac{n}{c-1}\rfloor$. As all remaining
color classes are contained in one of the $V_i$'s, their size is
at most $\lceil \frac{n}{c-1}\rceil$. Thus if a $V_j$ contains at
least two points from $\cup_{i=1}^k C_i$, then it cannot contain a
color class of size at least $\lfloor \frac{n}{c-1}\rfloor$. On
the other hand if a $V_j$ contains at most one point of
$\cup_{i=1}^k C_i$, then, due to the spanning cycle, we are able
to tell whether it contains a color class of size $\lceil
\frac{n}{c-1}\rceil$ or $\lfloor \frac{n}{c-1}\rfloor$.
\end{proof}

One can improve the upper bound of \tref{nonadaptplur} for $c=3$. It is not hard to show that the following constructions for $G_Q$ correspond to query sets that solve the problem: if $n=2k$, then consider the complete bipartite graph $K_{k,k}$ on the partite sets $\{u_1,u_2,...,u_k\}$, $\{v_1,v_2,...,v_k\}$. Add the edges of the paths $(u_1u_2...u_k)$ and $(v_1v_2..v_k)$ and remove the edges of the matching $\{(u_i,v_i): i=2,...,k-1\}$. While if $n=2k+1$, then consider the complete bipartite graph $K_{k+1,k}$ on the partite sets $\{u_1,u_2,...,u_k,u_{k+1}\},\{v_1,v_2,...,v_k\}$. Add the edges of the path $(u_1u_2...u_ku_{k+1})$. These observations yield to the following theorem.

\begin{theorem}
\label{thm:3} (i) $P^*_3(2k)=k(k+1)$,

\noindent (ii) $\frac{1}{2}(k+1)(2k+1) \le P^*_3(2k+1) \le
\frac{1}{2}(k+1)(2k+1)+k-1$.

\end{theorem}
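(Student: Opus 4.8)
The plan is to prove both the lower and the upper bounds, treating the even and odd cases separately, as the stated constructions already indicate the target graphs. For the lower bounds, I would invoke \lref{mindeg} with $c=3$: every vertex of a valid query graph $G_Q$ must have degree larger than $n-1-\lceil (n-1)/2\rceil$, with the improved bound $n-1-\lfloor(n-1)/2\rfloor$ when $n-1\equiv 1 \pmod 2$, i.e.\ when $n$ is odd. For $n=2k$ this gives minimum degree at least $k$ (since $n-1=2k-1$ is odd, the ``furthermore'' clause applies: degree $>2k-1-(k-1)=k$, wait—one must be careful, $\lceil (2k-1)/2\rceil=k$, so degree $>2k-1-k=k-1$, hence $\ge k$), yielding $|E(G_Q)|\ge kn/2=k^2$; for the sharp value $k(k+1)$ one needs a finer argument than the pure degree bound, so I expect the even lower bound to require revisiting the adversary argument in \lref{mindeg} to extract an extra $\sim k$ edges, or a separate counting argument on how many pairs $(x,T_1)$ can be simultaneously ``dangerous''. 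For $n=2k+1$ the degree bound gives min degree $\ge k$ directly (now $n-1=2k$ is even, $\lceil 2k/2\rceil=k$, degree $>2k-k=k$, so $\ge k+1$—again need to check which branch of the lemma applies since $2k\equiv 0\not\equiv 1\pmod 2$ for $k\ge 1$), so $|E(G_Q)|\ge (k+1)(2k+1)/2$, matching the claimed lower bound in (ii).

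For the upper bounds I would verify that the two explicit constructions solve the non-adaptive plurality problem, reusing the structural ideas from the proof of the upper bound in \tref{nonadaptplur}. In the odd case $n=2k+1$, the graph is $K_{k+1,k}$ with a Hamiltonian path added to the larger side $\{u_1,\dots,u_{k+1}\}$; this graph has $k(k+1)+k=(k+1)(2k+1)/2 + (k-1)$ edges after a short count, matching the stated upper bound. The key observation to reuse is: if any cross edge $(u_i,v_j)$ returns SAME COLOR, then (since $K_{k+1,k}$ is complete bipartite, every vertex sees at least one of $u_i,v_j$) the entire color class of $u_i$ is identified, and then the remaining structure—at most one fully uncovered part, detected by the path—lets us finish, exactly as in the Tur\'an-graph argument. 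If no cross edge is SAME COLOR, then the two sides are each monochromatic-by-parts and the added path on the larger side disambiguates which of the two candidate color classes inside it is larger.

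In the even case $n=2k$ the construction is $K_{k,k}$ plus the two paths $(u_1\cdots u_k)$ and $(v_1\cdots v_k)$, minus the matching edges $\{(u_i,v_i):2\le i\le k-1\}$; a direct count gives $k^2 + 2(k-1) - (k-2) = k^2+k = k(k+1)$ edges, matching. The subtlety here is that we have deleted $k-2$ cross edges, so the earlier ``every $v$ sees $u_i$ or $v_j$'' argument must be re-examined: for the surviving cross pairs $(u_1,v_1)$, $(u_k,v_k)$ and $(u_i,v_j)$ with $i\ne j$, one checks the neighborhoods still cover $V$, while the paths on each side compensate for the missing matching edges when propagating a SAME COLOR answer. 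The main obstacle, as flagged above, is the \emph{even lower bound} $P^*_3(2k)\ge k(k+1)$: the minimum-degree lemma alone only gives $k^2$, so I would need to sharpen the adversary strategy—e.g.\ argue that if $G_Q$ has exactly $k^2$ edges then it is $k$-regular bipartite-like, and then construct an explicit adversary coloring defeating it, forcing at least $k$ additional edges. This is the step I expect to consume most of the work; the upper-bound verifications, while fiddly, are routine adaptations of the proof of \tref{nonadaptplur}.
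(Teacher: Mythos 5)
Your upper-bound plan and your lower bound for odd $n$ are fine and follow the paper's own route (the paper likewise leaves the verification of the two constructions to the reader, and your sketch of how to adapt the \tref{nonadaptplur} argument to the modified bipartite graphs is the intended one). The genuine gap is in the lower bound of (i), which you flag as ``the main obstacle'' and leave unresolved: no sharpening of the adversary argument is needed. For $n=2k$ and $c=3$ we have $n-1=2k-1\equiv 1\pmod{c-1}$, so the \emph{furthermore} clause of \lref{mindeg} applies and gives minimum degree strictly greater than $n-1-\lfloor (n-1)/2\rfloor = 2k-1-(k-1)=k$, i.e.\ minimum degree at least $k+1$, hence $|E(G_Q)|\ge 2k(k+1)/2=k(k+1)$, exactly the claimed bound. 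You in fact computed this correctly in your parenthetical and then talked yourself out of it by switching to the weaker general bound with the ceiling; the ceiling version is superseded here precisely because $2k-1$ is odd. This is the paper's intended proof --- it states explicitly that the lower bound in (i) is obtained from the $n-1\equiv 1\pmod{c-1}$ case of \lref{mindeg} --- so there are no ``missing $k$ edges'' and no need for the regularity analysis or the strengthened adversary you propose.

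A second, minor point: your edge count for the odd construction is off. $K_{k+1,k}$ plus the path on the $u$-side has $k(k+1)+k=k^2+2k$ edges, which equals $\tfrac12(k+1)(2k+1)+k-1$ only for $k=1$; for $k\ge 2$ it is strictly smaller, so the stated upper bound in (ii) still follows (the construction actually gives a slightly better bound than the one recorded in the theorem), but the identity you assert does not hold.
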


To obtain the lower bound in (i), one has to apply the $n-1 \equiv 1$ mod $c-1$ case of \lref{mindeg}. Details are left to the reader.

\vskip 0.5truecm

Finally, we turn our attention to the non-adaptive weighted plurality problem, i.e. determining $P^*_c(S)$ for a multiset $S$ of weights. \tref{nonadaptplur} shows that in general we cannot hope for anything better than the number of edges of the balanced complete $(c-1)$-partite graph on $n$ vertices. Our last theorem states that for any multiset of weights the number of edges of the balanced complete $c$-partite graph on $n$ vertices and a linear number of additional queries can solve the problem.

\begin{theorem}
\label{thm:nonadaptplurweight} For any multiset $S$ of $n$ weights the inequality $P^*_c(S) \le \frac{c-1}{2c}n^2+n-c$ holds.
\end{theorem}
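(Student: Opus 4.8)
The plan is to take as the query graph $G_Q$ the balanced complete $c$-partite Turán graph $T$ on $n$ vertices with partite sets $V_1,\dots,V_c$, together with a spanning path (or cycle) added inside each $V_i$; this accounts for at most $\frac{c-1}{2c}n^2 + n$ edges, and trimming the cycles to paths saves the extra $-c$. The key structural fact, exactly as in the proof of the upper bound in \tref{nonadaptplur} and as in \oref{multipart}, is that $T$ is complete multipartite, so whenever two balls $x\in V_i$, $y\in V_j$ with $i\ne j$ receive the answer SAME COLOR, their entire common color class can be identified: every other vertex $v$ is adjacent in $T$ to at least one of $x,y$. Thus after receiving all answers we know completely every color class that meets two distinct partite sets; call these the \emph{spread} classes, and call a class \emph{confined} if it lies inside a single $V_i$.

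Next I would organize the case analysis by how the spread classes cover the partite sets. Let $k$ be the number of spread classes $C_1,\dots,C_k$ and let $l$ be the number of $V_i$ not contained in $\bigcup_{j\le k} C_j$. As before, $l\le c-k$, and if $l=c-k$ then each uncovered partite set equals a single confined class of the form $V_j\setminus\bigcup_j C_j$, so we know every color class exactly and can compare their weights to answer the problem. The remaining case is $l\le c-k-1$. Here the spread classes cover at least $c-l \ge k+1$ partite sets among the $V_i$, so for each spread class $C_j$ we know $w(C_j)$ exactly. The only thing we might not know is the exact weight of a confined class sitting in some $V_i$ that also contains points of $\bigcup_j C_j$: then $V_i$ is split among one or more spread classes and one or two confined classes, and within $V_i$ the spanning path lets us read off which vertices form each confined class (a confined class is exactly a maximal path-segment of vertices all answering SAME COLOR among themselves and DIFFERENT COLOR to the spread vertices of $V_i$), hence we know its weight too. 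If $V_i$ contains no spread vertex, the spanning path again partitions $V_i$ into its at most two confined classes (it is the union of at most two color classes, distinguishable because the path restricted to each is connected). So in every case we recover the full list of color classes together with their weights, and then we simply compare: output the unique class of strictly maximum weight if it exists, otherwise report that there is no plurality.

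The one point that needs care — and the main obstacle — is bounding the number of confined classes inside a single $V_i$ that is \emph{partly} covered by spread classes. A priori such a $V_i$ could be cut by several spread classes into many monochromatic pieces, and then the spanning path on $V_i$ alone might not separate all of them. The resolution is that every spread vertex in $V_i$ has its color already \emph{globally} determined (by the previous paragraph), so a confined piece of $V_i$ is delimited by vertices whose colors we already know; the spanning path then only needs to distinguish one confined class from the already-identified spread classes, which a single SAME/DIFFERENT answer along each path edge does — in particular two confined classes in the same $V_i$ can occur only when $V_i$ contains no spread vertex at all, and then there are at most two of them, handled by the $k=0$-type argument. Once this bookkeeping is pinned down, the edge count $|E(T)| = \frac{c-1}{2c}n^2 - O(1)$ plus $n-c$ path edges gives the stated bound $P^*_c(S)\le \frac{c-1}{2c}n^2 + n - c$.
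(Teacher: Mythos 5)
There is a genuine gap, and it lies exactly at the point you flag as ``the main obstacle'' and then dismiss. Your construction uses the \emph{cardinality}-balanced complete $c$-partite Tur\'an graph, and your case analysis is imported from the $(c-1)$-partite unweighted proof of \tref{nonadaptplur}. But with $c$ parts and $c$ colors the pigeonhole counting changes, and your claim that a part with no spread vertex contains at most two confined classes is false. Concretely, take $c=4$, $n=12$, parts $V_1,\dots,V_4$ of size $3$, and let $V_1\cup V_2\cup V_3$ be a single (spread) color class $C_1$, while $V_4=\{v_1,v_2,v_3\}$ carries the remaining three colors. All cross edges into $V_4$ answer DIFFERENT and the path $v_1v_2v_3$ answers DIFFERENT, DIFFERENT; this is consistent both with the partition $\{v_1,v_3\},\{v_2\}$ and with three singletons. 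Choosing $w(v_1)=w(v_3)=5$, $w(v_2)=1$ and unit weights on $C_1$, the first coloring makes $\{v_1,v_3\}$ (weight $10$) the plurality class and the second makes $C_1$ (weight $9$) the plurality class, so no post-processing of the answers can succeed: the query graph itself is inadequate. This cannot happen in the unweighted setting (where $|C_1|$ dwarfs anything inside a single part), which is precisely why the weighted theorem needs a different idea.

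The paper's proof supplies that idea: it partitions the balls into $c$ parts $A_1,\dots,A_c$ balanced by \emph{weight}, via a local-improvement argument producing a partition with $w(A_i)-w(v)\le w(A_c)\le w(S)/c$ for every $v\in A_i$. This guarantees that any color class of weight exceeding the average $w(S)/c$ --- in particular any plurality candidate --- is either exactly some $A_i$ (certified monochromatic by a spanning tree) or meets two parts (hence fully identified by \oref{multipart}). Confined classes that are proper subsets of a part can then simply be ignored, because they cannot have weight above $w(S)/c$; there is no need to identify every color class, which, as the example shows, is impossible within the stated number of queries. Your edge count is essentially right, but the partition must be chosen by weight, and the analysis must restrict attention to above-average-weight classes rather than attempt a full classification.
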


\begin{proof} Let us partition $S$ into $c$ classes $A_1,\dots, A_c$ with
$w(A_1)\ge w(A_2)\ge \dots \ge w(A_c)$ such that $w(A_i)-w(v) \le
w(A_c)$ for any $i\le c$ and any $v \in A_i$. It is easy to see
that such a partition exists. Indeed, starting from any partition
if $v$ and $A_i$ violates this property, we can remove $v$ from
$A_i$ and add it to $A_c$. Let $B$ denote the set of all pairs
$(v,A_i)$ such that $v \in A_i$ and $w(A_i)-w(v)>w(A_c)$. Put
$O=O(A_1,A_2,...,A_c)=\sum_{(v,A_i) \in B}w(A_i)-w(v)-w(A_c)$.
Note that after removing $v$ from $A_i$ and adding it to $A_c$ the
value of $O$ strictly decreases, hence this process must stop
after a certain time and the resulting partition satisfies the
required property. Note that $w(A_i)-w(v) \le w(A_c) \le w(S)/c$,
and the sum of the weights in a plurality color must be larger
than $w(S)/c$, the average weight of color classes. It implies
that a plurality color (in fact any color with weight larger than
$w(S)/c$) either contains vertices from more than one class, or is
equal to $A_i$ for some $i$.

Let $G_0$ be the complete $c$-partite graph with parts $A_1,\dots, A_c$, and add a spanning tree into every part. Let $G$ be the resulting graph. It is well-known that $G_0$ cannot have more edges than the $c$-partite Tur\'an graph on $n$ vertices, and the spanning trees add $n-c$ additional edges. We claim that this graph solves the weighted plurality problem. If there is a color with weight larger than $w(S)/c$, then it is either a class $A_i$, and in that case the spanning tree shows that it is monocolored, or it contains vertices from at least two parts, and then by \oref{multipart} we can identify all its vertices. Hence all the colors with weight larger than $w(S)/c$ are completely identified, all one has to do is to check if the largest weight appears only once or more.
\end{proof}

\section*{Acknowledgement.} We would like to thank Bal\'azs Keszegh and G\'abor Wiener for fruitful discussions.


\begin{thebibliography}{99}

\bibitem{A}
\textsc{M. Aigner}, Variants of the majority problem, Discrete Applied Mathematics 137 (2004) 3--25.
\bibitem{ADM}
\textsc{M. Aigner, G. De Marco, M. Montangerob}, The plurality problem with three colors and more, Theoretical Computer Science 337 (2005) 319--330.
\bibitem{ARS}
\textsc{L. Alonso, E. Reingold, R. Schott}, Determining the majority, Inform.
Process. Lett. 47 (1993) 253--255.
\bibitem{CGMY1}
\textsc{F. Chung, R. Graham, J. Mao, A. Yao},
Finding Favorites, Electronic Colloquium on Computational Complexity, Report No. 78 (2003).
\bibitem{CGMY2}
\textsc{F. Chung, R. Graham, J. Mao, A. Yao}, Oblivious and Adaptive Strategies
for the Majority and Plurality Problems, Computing and combinatorics, Lecture
Notes in Computer Science 3595, Springer, Berlin (2005), 329--338.
\bibitem{DKW}
\textsc{G. De Marco, E. Kranakis, G. Wiener}, Computing Majority with Triple
Queries, Proceedings of COCOON 2011, Lecture Notes inComputer Science 6842,
604--611.
\bibitem{FS}
\textsc{M.J. Fisher, S.L. Salzberg}, Finding a Majority Among $n$ Votes, J.
Algorithms 3 (1982) 375--379.
\bibitem{HKM}
\textsc{T.P. Hayes, S. Kutin, D. van Melkebeek}, On the Quantum Black-Box
Complexity of Majority, Algorithmica 34 (2002) 480--501.
\bibitem{SW}
\textsc{M. E. Saks, M. Werman}, On computing majority by comparisons, Combinatorica 11 (1991), 383--387.
\bibitem{RV}
\textsc{R. Rivest, J. Vuillemin}, On recognizing graph properties from
adjacency matrices, Theoret. Comput. Sci. 3 (1976) 371--384.
\bibitem{W}
\textsc{G. Wiener}, Search for a majority element, J. Stat. Plann. Inf. 100
(2002) 313--318.
\end{thebibliography}
\end{document}